\documentclass[a4paper]{amsart}                % American Mathematical Society document class for articles
%\documentclass[a4paper,draft]{amsart}          % Modo "borrador" para controlar los m�rgenes

% Agranda los m�rgenes
\addtolength{\textwidth}{2cm} \addtolength{\hoffset}{-1cm}
\addtolength{\textheight}{2cm} \addtolength{\voffset}{-1cm}

\usepackage[latin1]{inputenc}                  % Codificaci�n europea del teclado (acentos, s�mbolos teclado ...)
\usepackage[T1]{fontenc}                       % Acentos de palabras y divisi�n sil�bica
\usepackage[spanish,english]{babel}            % Idioma principal el ingles
\usepackage{graphicx}                  % Incorporar gr�ficos
\usepackage{amsmath,amssymb,amsthm}            % Paquetes de la American Mathematical Society
\usepackage{latexsym}                          % S�mbolos
\usepackage{delarray}                          % Delimiter array: permite integrar delimitadores en las opciones del entorno array
\usepackage{bbm}                               % Fuentes para los n�meros reales, racionales, ...
\usepackage{hyperref}                          % Enlaces de "hipertexto"
\usepackage{calrsfs,eufrak,mathrsfs,upgreek}   % Tipo de letra
\usepackage{bbding,trfsigns}                   % S�mbolos
\usepackage{wasysym}                           % Variar el tama�o de los s�mbolos
\usepackage{datetime}

\DeclareMathAlphabet{\mathpzc}{OT1}{pzc}{m}{it} % Tipo de letra \mathpzc

%\usepackage{fancyhdr}                         % Estilos para los encabezados
%\lhead{\textit{}} \chead{} \rhead{\texttt{\scriptsize{}}} \pagestyle{fancy}

% Evitar l�neas hu�rfanas o viudas
%\clubpenalty=10000 \widowpenalty=10000

\newtheorem{teo}{Theorem}

\newtheorem{lema}{Lemma}
\newtheorem{propo}{Proposition}

\author[J. Betancor]{J.J. Betancor}

\author[A.J. Castro]{A.J. Castro}
%\address{}
%\email{}

\author[J. Curbelo]{J. Curbelo}
%\address{}
%\email{}

\author[J.C. Fari\~{n}a]{J.C. Fari\~{n}a}
%\address{Departamento de An\'{a}lisis Matem\'{a}tico\\
%Universidad de la Laguna\\
%Campus de Anchieta, Avda. Astrof\'{\i}sico Francisco S\'{a}nchez, s/n\\
%38271 La Laguna (Sta. Cruz de Tenerife), Espa\~na}
%\address{Same address in La Laguna}
%\email{jcfarina@ull.es}

\author[L. Rodr\'{\i}guez-Mesa]{L. Rodr\'{\i}guez-Mesa}
%\address{Departamento de An\'{a}lisis Matem\'{a}tico\\
%Universidad de la Laguna\\
%Campus de Anchieta, Avda. Astrof\'{\i}sico Francisco S\'{a}nchez, s/n\\
%38271 La Laguna (Sta. Cruz de Tenerife), Espa\~na}
%\address{Same address in La Laguna}
%\email{lrguez@ull.es}
\address{\newline
        Jorge J. Betancor, Alejandro J. Castro, Juan C. Fari\~na and Lourdes Rodr\'{\i}guez-Mesa \newline
        Departamento de An\'alisis Matem\'atico,
        Universidad de la Laguna, \newline
        Campus de Anchieta, Avda. Astrof\'{\i}sico Francisco S\'anchez, s/n, \newline
        38271, La Laguna (Sta. Cruz de Tenerife), Spain}
\email{jbetanco@ull.es, ajcastro@ull.es, jcfarina@ull.es, lrguez@ull.es\vspace*{-0.3cm}}

\address{\newline
        Jezabel Curbelo \newline
        Instituto de Ciencias Matem\'aticas (CSIC-UAM-UCM-UC3M), \newline
        Nicol\'as Cabrera, 13-15, \newline
        28049, Madrid, Spain}
\email{jezabel.curbelo@icmat.es}

\thanks{This paper is partially supported by MTM2010/17974 . The second author is also supported by a FPU grant from the Government of Spain.
        The third author is supported by a grant JAE-Predoc of the CSIC (Spain).}
\date{\today}

\begin{document}

\title[Hermite square functions for vector-valued functions in UMD spaces]
{Square functions in the Hermite setting for functions with values in UMD spaces}

\subjclass[2010]{46E40, 46B20} \keywords{$\gamma$-radonifying operators, UMD Banach spaces, Hermite operator, Littlewood-Paley $g$-functions}
\begin{abstract}
In this paper we characterize the Lebesgue Bochner spaces $L^p(\mathbb{R}^n,B)$, $1<p<\infty$, by using Littlewood-Paley $g$-functions in the Hermite setting, provided that $B$ is a UMD Banach space. We use $\gamma$-radonifying operators $\gamma (H,B)$ where $H=L^2((0,\infty ),\frac{dt}{t})$. We also characterize the UMD Banach spaces in terms of $L^p(\mathbb{R}^n,B)-L^p(\mathbb{R}^n,\gamma (H,B))$ boundedness of Hermite Littlewood-Paley $g$-functions.
\end{abstract}

\maketitle

\section{Introduction}
As it is wellknown the Hilbert transform $\mathcal{H}(f)$ of $f\in L^p(\mathbb R)$, $1 \leq p < \infty$, is defined by
$$
\mathcal{H}(f)(x)= \frac{1}{\pi}\lim_{\varepsilon \rightarrow 0^+}\int_{|x-y|>\varepsilon}\frac{f(y)}{x-y}dy,\;\;\mbox{a.e.}\;\;x \in\mathbb R.
$$
The operator $\mathcal{H}$ is bounded from $L^p(\mathbb R)$ into itself, for every $1<p<\infty$, and from $L^1(\mathbb R)$ into $L^{1,\infty}(\mathbb R)$. If $1\leq p < \infty$ and $B$ is a Banach space, the Hilbert transform is defined on $L^p(\mathbb R)\otimes B$ in a natural way. A Banach space $B$ is said to be a UMD space when the Hilbert transform $\mathcal{H}$ can be extended to the Bochnner-Lebesgue space $L^p(\mathbb R, B)$ as a bounded operator from $L^p(\mathbb R, B)$ into itself, for some $1<p<\infty$. The UMD property does not depend on $p$. Indeed, if $\mathcal{H}$ can be extended to $L^p(\mathbb R,B)$ as a bounded operator from  $L^p(\mathbb R, B)$ into itself, for some $1<p<\infty$, then this property holds for every $1<p<\infty$. Moreover, $B$ is a UMD Banach space if and only if the Hilbert transform can be extended to $L^1(\mathbb R,B)$ as a bounded operator from $L^1(\mathbb R, B)$ into $L^{1,\infty}(\mathbb R,B)$. There exist many other characterizations for the UMD Banach spaces (\cite{AT}, \cite{BFRST}, \cite{BFMT}, \cite{Bo1}, \cite{Bu2}, \cite{HTV1} and \cite{Hy2}, amongst others).

Inspired by the results due to Kaiser and Weis \cite{KW}, in this paper we characterize the Bochner-Lebesgue spaces $L^p(\mathbb R^n,B)$, $1<p<\infty$, where $B$ is a UMD space, by using Littlewood-Paley $g$-functions associated with the Poisson semigroup for the Hermite operator and $\gamma$-radonifying operators. We also obtain new characterizations of UMD Banach spaces.

We consider the Hermite (also called harmonic oscillator) operator
$$
L=-\Delta + |x|^2,\;\; \mbox{on}\;\; \mathbb R^n,\;n \geq 1,
$$
where $\Delta$ denotes the Euclidean Laplacian. For every $k \in \mathbb N$, we define the $k$-th Hermite function by
$$
h_k(z) = (\sqrt \pi 2^k k!)^{-\frac{1}{2}}H_k(z) e^{-\frac{z^2}{2}}, \;\;z \in \mathbb R,
$$
being $H_k$ the $k$-th Hermite polynomial (\cite[p. 106]{Sz}) given by
$H_k(z)=(-1)^ke^{z^2}\frac{d^k}{dz^k}e^{-z^2}$, $z\in \mathbb{R}$.

If $k=(k_1,\ldots,k_n)\in \mathbb N^n$, and $h_k(x)= \prod^n_{j=1}h_{k_j}(x_j)$,  $x=(x_1, \ldots,x_n) \in \mathbb R^n$, we have that
$$
Lh_k= (2|k|+n)h_k,
$$
where $|k|= k_1+\ldots+k_n$. The system $\{h_k\}_{k\in \mathbb N^n}$ is orthonormal and complete in $L^2(\mathbb R^n)$. The operator $\mathcal{L}$ is defined by
$$
\mathcal{L}(f)= \sum_{k\in \mathbb N^n}(2|k|+n)\langle f,h_k\rangle h_k,\;\; f \in D(\mathcal{L}),
$$
where
$$
D(\mathcal{L})= \{f \in L^2(\mathbb R^n): \sum_{k\in \mathbb N^n}|(2|k|+n)\langle f,h_k\rangle|^2 < \infty\}.
$$
Here, for every $k \in \mathbb N^n$ and $f \in L^2(\mathbb R^n)$, $\langle f,h_k\rangle = \int_{\mathbb R^n}f(y)h_k(y)dy$. It is clear that $\mathcal{L}(f)= L(f)$, when $f$ belongs to $C_c^\infty(\mathbb R^n)$, the space of smooth functions with compact support in $\mathbb R^n$. The operator $-\mathcal{L}$ generates on $L^2(\mathbb R^n)$ the semigroup of operators $\{W_t^{\mathcal{L}}\}_{t>0}$ where, for every $t>0$,
$$
W_t^{\mathcal{L}}(f) = \sum_{k\in \mathbb N^n}e^{-(2|k|+n)t}\langle f,h_k\rangle h_k,\;\; f \in L^2(\mathbb R^n).
$$
According to the Mehler's formula (\cite[(1.1.36)]{Th2}) we get, for every $f  \in L^2(\mathbb R^n)$,
\begin{equation}\label{Hersemi}
W_t^{\mathcal{L}}(f)(x)=\int_{\mathbb R^n}W_t^{\mathcal{L}}(x,y)f(y)dy,\quad x\in \mathbb R^n \mbox{ and } t>0,
\end{equation}
being
$$
W_t^{\mathcal{L}}(x,y) = \frac{1}{\pi^{\frac{n}2}}\left(\frac{e^{-2t}}{1-e^{-4t}}\right)^{\frac{n}{2}}\exp\left(-\frac{|x|^2+|y|^2}{2}\frac{1+e^{-4t}}{1-e^{-4t}}+\frac{2e^{-2t}x\cdot y}{1-e^{-4t}}\right), \;\;x,y \in \mathbb R^n\mbox{ and } t>0.
$$
By defining $W_t^{\mathcal{L}}(f)$, for every $f \in L^p(\mathbb R^n)$, by (\ref{Hersemi}), the family $\{W_t^{\mathcal{L}}\}_{t>0}$ is a positive semigroup of contractions in $L^p(\mathbb R^n)$, for every $1 \leq p \leq \infty$.

The Poisson semigroup associated to $\mathcal{L}$ (generated by $-\sqrt \mathcal{L}$) on $L^p(\mathbb R^n)$ can be written by using the subordination formula as follows
$$
P_t^\mathcal{L}(f) = \frac{t}{\sqrt{4\pi}}\int_0^\infty u^{-\frac{3}{2}}e^{-\frac{t^2}{4u}}W_u^{\mathcal{L}}(f)du,\;\; t>0,\;\; f \in L^p(\mathbb R^n),
$$
for every
$1 \leq p \leq \infty$.

The usual Littlewood-Paley $g$-function for the Poisson semigroup $\{P_t^{\mathcal{L}}\}_{t>0}$ is defined by
$$
g_{\mathcal{L}}(f)(x)= \left(\int_0^\infty \left|t\frac{\partial}{\partial t}P_t^{\mathcal{L}}(f)(x)\right|^2\frac{dt}{t}\right)^{\frac{1}{2}},\quad x\in \mathbb{R}^n.
$$
According to \cite[Theorem 3.2]{StTo3} and \cite[Proposition 2.3]{BFRTT} we have that, for every $1<p<\infty$,
\begin{equation}\label{acotgfn}
\frac{1}{C}\|f\|_{L^p(\mathbb{R}^n)} \leq \|g_{\mathcal{L}}(f)\|_{L^p(\mathbb{R}^n)} \leq C\|f\|_{L^p(\mathbb{R}^n)},\;\; f\in L^p(\mathbb R^n).
\end{equation}

Segovia and Wheeden (\cite{SW}) defined a fractional derivative as follows. If $F(x,t)$, $x\in \mathbb{R}^n$ and $t>0$, is a good enough function and $\sigma >0$, the $\sigma$-th $t$-derivative $\partial_t^\sigma F(x,t)$ of $F$ is given by
$$
\partial_t^\sigma F(x,t)=\frac{e^{-i\pi (m-\sigma)}}{\Gamma (m-\sigma )}\int_0^\infty \frac{\partial ^m}{\partial t^m}F(x,t+s)s^{m-\sigma -1}ds,\quad x\in \mathbb{R}^n\mbox{ and }t>0,
$$
where $m$ is the smallest integer which strictly exceeds $\sigma$. By using this fractional derivative, Segovia and Wheeden introduced a generalized Littlewood-Paley $g$-function by
$$
g_{-\Delta ,\sigma }(f)(x)=\left(\int_0^\infty |t^\sigma \partial_t^\sigma P_t^{-\Delta }(f)(x)|^2\frac{dt}{t}\right)^{\frac{1}{2}},\quad \sigma >0.
$$
Here $\{P_t^{-\Delta }\}_{t>0}$ denotes the Poisson semigroup for the classical Laplacian operator. They cha\-rac\-te\-rized Sobolev spaces in terms of the square functions $g_{-\Delta ,\sigma}$.

Inspired by \cite{SW}, in \cite{BFRTT} the generalized Littlewood-Paley function $g_{\mathcal{L},\sigma}$ associated with the Poisson semigroup for the Hermite operator is defined by
$$
g_{\mathcal{L},\sigma}(f)(x)=\left(\int_0^\infty |t^\sigma \partial_t^\sigma P_t^{\mathcal{L}}(f)(x)|^2\frac{dt}{t}\right)^{\frac{1}{2}},\quad \sigma >0.
$$
It is clear that $g_{\mathcal{L},1}=g_\mathcal{L}$. Recently, Torrea and Zhang \cite{TZ} have considered generalized Littlewood-Paley $g$-functions associated with diffusion semigroups (in the sense of Stein \cite[Chap. III]{Stein}).

By taking into account \cite[Proposition 2.3 and \S 3.2]{BFRTT} we obtain the following extension of (\ref{acotgfn}), for every $1<p<\infty$,
\begin{equation}\label{acotgfnf}
\frac{1}{C}\|f\|_{L^p(\mathbb{R}^n)}  \leq \|g_{\mathcal{L},\sigma }(f)\|_{L^p(\mathbb{R}^n)}  \leq C\|f\|_{L^p(\mathbb{R}^n)} ,\;\; f\in L^p(\mathbb R^n)\mbox{ and }\sigma >0.
\end{equation}

Our first objective is to extend the definition of the Littlewood-Paley $g_{\mathcal{L},\sigma }$-function to functions $f$ defined on $\mathbb R^n$ and taking values in a Banach space $B$ in such a suitable way that (\ref{acotgfnf}) holds for every $f \in L^p(\mathbb R^n,B)$, $1<p<\infty$.

Suppose that $B$ is a Banach space and $\sigma >0$. A first and natural definition of the Littlewood-Paley function on $L^p(\mathbb R^n,B)$, $1<p<\infty$, is the following one. If $f \in L^p(\mathbb R^n,B)$, $1<p<\infty$, we define
$$
G_{\mathcal{L},\sigma ,B}(f)(x)=\left(\int_0^\infty \left\|t^\sigma \partial_t^\sigma P_t^{\mathcal{L}}(f)(x)\right\|_B^2\frac{dt}{t}\right)^{\frac{1}{2}},\;\; x \in \mathbb R^n.
$$
This type of $g$-functions were considered, for instance, in \cite{MTX}, \cite{TZ} and \cite{Xu}. In \cite[Theorems 1 and 2]{BFRST2} it was established that $B$ is isomorphic to a Hilbert space if, and only if, for some (equivalently, for any) $1<p<\infty$, we have that
\begin{equation}\label{J1}
\frac{1}{C}\|f\|_{L^p(\mathbb{R},B)} \leq \|G_{\mathcal{L},1 , B}(f)\|_{L^p(\mathbb{R})} \leq C\|f\|_{L^p(\mathbb {R},B)},\;\; f\in L^p(\mathbb {R},B).
\end{equation}

Following the ideas developed in \cite{TZ} we can also show that $B$ is isomorphic to a Hilbert space if, and only if, for some (or equivalently, for any) $1<p<\infty$ and $\sigma >0$,
\begin{equation}\label{J2}
\frac{1}{C}\|f\|_{L^p(\mathbb R^n,B)} \leq \|G_{\mathcal{L},\sigma , B}(f)\|_{L^p(\mathbb R^n)} \leq C\|f\|_{L^p(\mathbb R^n,B)},\;\; f\in L^p(\mathbb R^n,B).
\end{equation}
The Banach spaces that are isomorphic to Hilbert spaces can also be characterized considering in (\ref{J1}) and (\ref{J2}) diffusion semigroups (\cite{MTX} and \cite{TZ}).

Motivated by the ideas in \cite{K} and \cite{KW} we are going to give a new definition of $g$-functions in the Hermite setting in $L^p(\mathbb R^n,B)$, $1<p<\infty$, for which we will prove (\ref{J2}) for all UMD Banach spaces. Note that the UMD property is stable by isomorphisms and that every Hilbert space is a UMD space but, for instance, the space $L^q(\mathbb R^n)$, $1<q<\infty$, $q \not= 2$, is a UMD space which is not isomorphic to a Hilbert space.

We consider a sequence $(\gamma_k)_{k=1}^\infty$  of independent standard Gaussian random variables on a probability space $(\Omega,  \mathcal{A}, P)$. Suppose that $H$ is a separable Hilbert space. If $T\in \mathcal{L}(H,B)$, (that is, $T$ is a bounded operator from $H$ into $B$) we say that $T$ is a $\gamma$-radonifying operator, shortly $T\in \gamma(H,B)$, when the series $\sum_{k=1}^\infty \gamma_kT(e_k)$ converges in $L^2(\Omega,B)$, where $(e_k)_{k=1}^\infty$ is an orthonormal basis in $H$, and we define
$$
\|T\|_{\gamma(H,B)}= \left(\mathbb{E}\left\|\sum_{k=1}^\infty\gamma_kT(e_k)\right\|_B^2\right)^{\frac{1}{2}}.
$$
This last quantity does not depend on the orthonormal basis $(e_k)_{k=1}^\infty$.

The space $\gamma(H,B)$ of $\gamma$-radonifying operators can be defined for every (not necessarily separable) Hilbert space but in this paper we only consider $H=L^2((0,\infty),\frac{dt}{t})$ that is a separable Hilbert space. The main properties of $\gamma$-radonifying operators can be found in \cite{Ne}.

Suppose that $H=L^2(W,\mu)$ where $(W,\mathcal{B},\mu)$ is a $\sigma$-finite measure space with a countably generated $\sigma$-algebra $\mathcal{B}$ and that $f:W \rightarrow B$ is a strongly $\mu$-measurable function such that, for every $S \in B^*$, the dual space of $B$, the function $S\circ f$ belongs to $H$.  There exists a bounded operator $T_f:H \rightarrow B$ for which
$$
\langle S, T_fh\rangle _{B^*,B}= \int_W \langle S,f(w)\rangle _{B^*,B} h(w)d\mu(w), \;\;S \in B^*\;\;\mbox{and}\;\; h \in H,
$$
where $\langle \cdot,\cdot \rangle  _{B^*,B}$ denotes the $(B^*,B)$-duality.

\noindent We say that $f \in \gamma(W,\mu,B)$ when $T_f \in \gamma(H,B)$ and then we define
$$
\|f\|_{\gamma(W,\mu,B)} = \|T_f\|_{\gamma(H,B)}.
$$
It is usual to identify $f$ with $T_f$.

Let $\alpha > -n$. The semigroup $\{W_t^{\mathcal{L}+\alpha}\}_{t> 0}$ generated by the operator $-(\mathcal{L}+\alpha)$ in $L^2(\mathbb R^n)$ can be written as follows
$$
W_t^{\mathcal{L}+\alpha}(f) = \sum_{k\in \mathbb N^n}e^{-t(2|k|+n+\alpha)}\langle f,h_k,\rangle h_k,\;\; f \in L^2(\mathbb R^n)\;\;\mbox{and}\;\;t>0.
$$
From (\ref{Hersemi}) we deduce that, for every $f\in L^2(\mathbb R^n)$,
$$
W_t^{\mathcal{L}+\alpha}(f)(x)=\int_{\mathbb R^n}W_t^\mathcal{L}(x,y)e^{-\alpha t}f(y) dy,\;\; x \in \mathbb R^n\;\;\mbox{and}\;\; t >0.
$$
By using the subordination formula, the Poisson semigroup $\{P_t^{\mathcal{L}+\alpha}\}_{t>0}$ associated with $\mathcal{L}+\alpha$ (generated by $-\sqrt{\mathcal{L}+\alpha}$) is given by
$$
P_t^{\mathcal{L}+\alpha}(f)= \frac{t}{\sqrt{4\pi}}\int_0^\infty u^{-\frac{3}{2}}e^{-\frac{t^2}{4u}-\alpha u}W_u^{\mathcal{L}}(f)du,\;\;f\in L^2(\mathbb R^n)\;\; \mbox{and}\;\; t>0.
$$

It is clear that, for every $t>0$, $W_t^{\mathcal{L}+\alpha}$ and $P_t^{\mathcal{L}+\alpha}$ are positive operators. Then, for every $t>0$, $W_t^{\mathcal{L}+\alpha}$ and $P_t^{\mathcal{L}+\alpha}$ define bounded operators from $L^2(\mathbb R^n,B)$ into itself when the integrals are considered in a $B$-Bochner sense.

We consider the space $H=L^2((0,\infty),\frac{dt}{t})$ and $\sigma >0$. Suppose that $f\in L^2(\mathbb R^n,B)$. By \cite[Lemma 2.1(ii)]{BFRTT} we have that, for every $x \in \mathbb R^n$ and $t>0$,
\begin{equation}\label{gint}
t^\sigma \partial_t^\sigma P^{\mathcal{L}+\alpha }_t(f)(x)=e^{i\pi \sigma}\sum_{k\in \mathbb N^n}(t\sqrt{2|k|+n+\alpha })^\sigma e^{-t\sqrt{2|k|+n+\alpha }}\langle f,h_k\rangle h_k(x)
\end{equation}
and it is strongly measurable (see proof of Proposition \ref{polarization}).

According to (\cite[(2.1) and Lemma 2.1]{StTo}), for every $t>0$ and $x \in \mathbb R^n$, the series in (\ref{gint}) converges in $B$. Then, for every $S \in B^*$, it follows that, for each $x\in \mathbb{R}^n$,
$$
\langle S,t^\sigma \partial_t^\sigma P_t^{\mathcal{L}+\alpha }(f)(x)\rangle  _{B^*,B}=e^{i\pi \sigma}\sum_{k\in \mathbb N^n}(t\sqrt{2|k|+n+\alpha })^\sigma e^{-t\sqrt{2|k|+n+\alpha }}\langle \langle S, f\rangle  _{B^*,B},h_k\rangle h_k(x)
$$
and by using \cite[ Lemma 2.1(ii) and Proposition 2.1(ii)]{BFRTT}, we get
$$
\left\|\|\langle S, t^\sigma \partial_t^\sigma P^{\mathcal{L}+\alpha }_t(f)\rangle  _{B^*,B}\|_{ H}\right\|_{L^2(\mathbb R^n)}
=\frac{\Gamma (2\sigma )}{2^{2\sigma}}\|\langle S,f\rangle  _{B^*,B}\|_{L^2(\mathbb R^n)}.
$$
Hence, $\langle S, t^\sigma \partial_t^\sigma P^{\mathcal{L}+\alpha }_t(f)(x)\rangle  _{B^*,B}\in H$, a.e. $x \in \mathbb R^n$.

We consider the operator $\mathcal{G}_{\mathcal{L}+\alpha,\sigma ,B}$ defined by
$$
\mathcal{G}_{\mathcal{L}+\alpha,\sigma ,B}(f)(x,t)= t^\sigma \partial_t^\sigma P_t^{\mathcal{L}+\alpha}(f)(x),\;\;x\in \mathbb R^n \;\;\mbox{and}\;\; t>0,
$$
for every $f\in L^2(\mathbb R^n,B)$. When $\sigma =1$, instead of $\mathcal{G}_{\mathcal{L}+\alpha ,1,B}$, we simply write $\mathcal{G}_{\mathcal{L}+\alpha ,B}$ .

Our first result is the following.
\begin{teo}\label{Teo1}
Let $B$ be a UMD Banach space, $\alpha >-n$ and $\sigma >0$. Then, the operator $\mathcal{G}_{\mathcal{L}+\alpha ,\sigma ,B}$ is bounded
\begin{enumerate}
\item [$(i)$] from $L^p(\mathbb R^n,B)$ into $L^p(\mathbb{R}^n, \gamma (H,B))$, for every $1<p<\infty$,
\item [$(ii$)] from $L^1(\mathbb R^n,B)$ into $L^{1,\infty}(\mathbb R^n,\gamma (H,B))$, and
\item [$(iii)$] from $H^1(\mathbb{R}^n, B)$ into $L^1(\mathbb R^n,\gamma (H,B))$.
\end{enumerate}
Moreover, for every $1<p<\infty$,
\begin{equation}\label{fcomgfu}
\|f\|_{L^p(\mathbb R^n,B)} \sim \|\mathcal{G}_{\mathcal{L}+\alpha ,\sigma ,B}(f)\|_{L^p(\mathbb R^n,\gamma(H,B))},\;\; f \in L^p(\mathbb R^n,B).
\end{equation}
\end{teo}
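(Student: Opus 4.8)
The plan is to treat $\mathcal{G}_{\mathcal{L}+\alpha,\sigma,B}$ as a vector‑valued Calder\'on--Zygmund operator whose kernel takes values in the Hilbert space $H=L^2((0,\infty),dt/t)$ (identified with $\gamma(H,\mathbb{C})$) and acts on $B$‑valued functions by scalar multiplication. By (\ref{gint}), for $f$ in a dense subspace of $L^2(\mathbb{R}^n,B)$ we may write
\[
\mathcal{G}_{\mathcal{L}+\alpha,\sigma,B}(f)(x,t)=\int_{\mathbb{R}^n}K_t^{\alpha,\sigma}(x,y)f(y)\,dy,\qquad K_t^{\alpha,\sigma}(x,y)=t^\sigma\partial_t^\sigma P_t^{\mathcal{L}+\alpha}(x,y),
\]
and the associated $H$‑valued kernel is $\mathbb{K}^{\alpha,\sigma}(x,y)=\big(t\mapsto K_t^{\alpha,\sigma}(x,y)\big)$. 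The first — and, I expect, the hardest — task is to prove the size and smoothness estimates
\[
\big\|\mathbb{K}^{\alpha,\sigma}(x,y)\big\|_H=\Big(\int_0^\infty\big|K_t^{\alpha,\sigma}(x,y)\big|^2\frac{dt}{t}\Big)^{1/2}\le\frac{C}{|x-y|^n},\qquad x\ne y,
\]
together with the H\"ormander‑type bound $\big\|\nabla_x\mathbb{K}^{\alpha,\sigma}(x,y)\big\|_H+\big\|\nabla_y\mathbb{K}^{\alpha,\sigma}(x,y)\big\|_H\le C|x-y|^{-n-1}$ for $x\ne y$. One proves these by inserting the subordination formula for $P_t^{\mathcal{L}+\alpha}$, Mehler's formula (\ref{Hersemi}) for $W_u^{\mathcal{L}}$ and the Segovia--Wheeden representation of $\partial_t^\sigma$, and then carrying out Gaussian‑type estimates, closely following the scalar computations in \cite{BFRTT} and \cite{StTo}; the non‑locality of $\partial_t^\sigma$ and the need for an $L^2$‑in‑$t$ bound (rather than a pointwise one) are what make this step delicate. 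The shift $\alpha$ only contributes the harmless factor $e^{-\alpha u}$ in the subordination formula, so all constants may be taken uniform for $\alpha$ in compact subsets of $(-n,\infty)$.

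Granting the kernel estimates, I would first dispose of the $L^2$ endpoint. By (\ref{gint}), $\mathcal{G}_{\mathcal{L}+\alpha,\sigma,B}(f)(\cdot,t)=e^{i\pi\sigma}\varphi_\sigma(t\sqrt{\mathcal{L}+\alpha})f$ with $\varphi_\sigma(\lambda)=\lambda^\sigma e^{-\lambda}$; since for a UMD space $B$ the operator $\sqrt{\mathcal{L}+\alpha}$ admits a bounded holomorphic functional calculus on $L^2(\mathbb{R}^n,B)$ — for $B$ a Hilbert space this reduces, via Parseval and the $\gamma$‑Fubini isomorphism $L^2(\mathbb{R}^n,\gamma(H,B))\cong\gamma(H,L^2(\mathbb{R}^n,B))$ (see \cite{Ne}), to the scalar identity recorded just before the statement — the Kalton--Weis $\gamma$‑square function theory gives $\|f\|_{L^2(\mathbb{R}^n,B)}\sim\|\mathcal{G}_{\mathcal{L}+\alpha,\sigma,B}(f)\|_{L^2(\mathbb{R}^n,\gamma(H,B))}$; in particular $\mathcal{G}_{\mathcal{L}+\alpha,\sigma,B}(f)(x)$ is a genuine element of $\gamma(H,B)$ for a.e.\ $x$, the strong measurability being the content of Proposition \ref{polarization}. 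Combining this $L^2$ bound with the kernel estimates above and the standard vector‑valued Calder\'on--Zygmund machinery (with target space $\gamma(H,B)$) yields $(i)$ for every $1<p<\infty$ and the weak‑type bound $(ii)$. For $(iii)$ one tests on $H^1(\mathbb{R}^n,B)$‑atoms $a$ supported in a ball $Q$: the contribution of $\|\mathcal{G}_{\mathcal{L}+\alpha,\sigma,B}(a)\|_{L^1(\gamma(H,B))}$ over $2Q$ is controlled by Cauchy--Schwarz and the $L^2$ bound, and the contribution over $(2Q)^c$ by the cancellation of $a$ together with the smoothness of $\mathbb{K}^{\alpha,\sigma}$ in the second variable. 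This proves $(i)$--$(iii)$ and the upper inequality in (\ref{fcomgfu}).

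It remains to establish the lower inequality in (\ref{fcomgfu}), which I would obtain by polarization and duality. From $\int_0^\infty\varphi_\sigma(s)^2\,\frac{ds}{s}=\Gamma(2\sigma)2^{-2\sigma}$ and the self‑adjointness of $\sqrt{\mathcal{L}+\alpha}$ one gets, on a dense subspace of $L^2$, the reproducing formula $f=c_\sigma\int_0^\infty\varphi_\sigma(t\sqrt{\mathcal{L}+\alpha})\big(\mathcal{G}_{\mathcal{L}+\alpha,\sigma,B}(f)(\cdot,t)\big)\,\frac{dt}{t}$ for a suitable constant $c_\sigma$. Hence, for $g\in L^{p'}(\mathbb{R}^n,B^*)$, transferring one copy of $\varphi_\sigma(t\sqrt{\mathcal{L}+\alpha})$ onto $g$,
\[
|\langle f,g\rangle|=|c_\sigma|\,\Big|\int_{\mathbb{R}^n}\int_0^\infty\big\langle\mathcal{G}_{\mathcal{L}+\alpha,\sigma,B}(f)(x,t),\mathcal{G}_{\mathcal{L}+\alpha,\sigma,B^*}(g)(x,t)\big\rangle_{B,B^*}\,\frac{dt}{t}\,dx\Big|,
\]
and by the trace duality $\gamma(H,B)\times\gamma(H,B^*)\to\mathbb{C}$ the inner integral is at most $\|\mathcal{G}_{\mathcal{L}+\alpha,\sigma,B}(f)(x)\|_{\gamma(H,B)}\|\mathcal{G}_{\mathcal{L}+\alpha,\sigma,B^*}(g)(x)\|_{\gamma(H,B^*)}$; H\"older in $x$ then gives
\[
|\langle f,g\rangle|\le C\,\|\mathcal{G}_{\mathcal{L}+\alpha,\sigma,B}(f)\|_{L^p(\mathbb{R}^n,\gamma(H,B))}\,\|\mathcal{G}_{\mathcal{L}+\alpha,\sigma,B^*}(g)\|_{L^{p'}(\mathbb{R}^n,\gamma(H,B^*))}.
\]
Since $B^*$ is again a UMD space, $(i)$ applied to $B^*$ bounds the last factor by $C\|g\|_{L^{p'}(\mathbb{R}^n,B^*)}$; as $B$ is reflexive, taking the supremum over $g$ with $\|g\|_{L^{p'}(\mathbb{R}^n,B^*)}\le1$ yields $\|f\|_{L^p(\mathbb{R}^n,B)}\le C\|\mathcal{G}_{\mathcal{L}+\alpha,\sigma,B}(f)\|_{L^p(\mathbb{R}^n,\gamma(H,B))}$, first on the dense subspace and then everywhere by density and $(i)$. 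The crux of the whole argument is the Calder\'on--Zygmund estimates for $\mathbb{K}^{\alpha,\sigma}$; granting those, the remaining steps are a routine combination of the $\gamma$‑radonifying formalism, classical singular integral theory, and the duality above.
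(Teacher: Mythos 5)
Your overall architecture coincides with the paper's: your size and smoothness bounds for the $H$-valued kernel are exactly (\ref{B1})--(\ref{B2}), your use of vector-valued Calder\'on--Zygmund theory gives $(i)$--$(iii)$ as in the paper (the atomic argument for $(iii)$ being a routine variant of the appeal to \cite{RRT}), and your duality argument for the lower bound in (\ref{fcomgfu}) is essentially Proposition \ref{polarization} combined with the $\gamma$-H\"older inequality of \cite{HyWe} and the fact that $B^*$ is again UMD. The genuine gap is at the $L^2$ endpoint. You assert that for \emph{every} UMD space $B$ the operator $\sqrt{\mathcal{L}+\alpha}$ (tensorized with the identity on $B$) has a bounded holomorphic functional calculus on $L^2(\mathbb{R}^n,B)$ and that the Kalton--Weis $\gamma$-square-function theory then yields the two-sided $L^2$ equivalence; but the only justification you give is for $B$ a Hilbert space, where everything reduces to Parseval. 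For general UMD $B$ this assertion is, modulo the $\gamma$-Fubini identification $L^2(\mathbb{R}^n,\gamma(H,B))\cong\gamma(H,L^2(\mathbb{R}^n,B))$ from \cite{Ne}, precisely the $L^2$ case of the theorem being proved, so as written the hardest step is assumed rather than established. The claim is not false: it can be extracted from the literature (Fendler's dilation of the positive $L^2$-contraction semigroup $\{W_t^{\mathcal{L}+\alpha}\}_{t>0}$ plus vector-valued transference \`a la Hieber--Pr\"uss gives an $H^\infty(\Sigma_\theta)$-calculus for $\mathcal{L}+\alpha$ on $L^2(\mathbb{R}^n,B)$ for every $\theta>\pi/2$; taking the square root halves the angle below $\pi/2$, which is essential since $\varphi_\sigma(\lambda)=\lambda^\sigma e^{-\lambda}$ only belongs to $H^1_0$ of sectors of angle strictly less than $\pi/2$; then Kalton--Weis applies because UMD implies finite cotype). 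None of this bookkeeping (which theorem supplies the UMD-valued calculus, and why the calculus angle is compatible with $\varphi_\sigma$) appears in your write-up, and it must.

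By contrast, the paper deliberately avoids abstract functional calculus (it notes that no vector-valued multiplier theorem for Hermite expansions is available): the $L^2(\mathbb{R}^n,B)\to L^2(\mathbb{R}^n,\gamma(H,B))$ bound is proved by transferring to the Ornstein--Uhlenbeck setting via a local/global decomposition in the spirit of \cite{HTV1} (Proposition \ref{acotZ}), handling the Euclidean piece with the Kaiser--Weis multiplier theorem \cite[Theorem 4.2]{KW} (Proposition \ref{PropoC}), and then passing to the Hermite operator by \cite{AT}. If you retain your route, the payoff is a shorter, more conceptual $L^2$ step, but you must cite or prove the dilation/transference and square-function results precisely. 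Two smaller points: the strong measurability of $\mathcal{G}_{\mathcal{L}+\alpha,\sigma,B}(f)(x,\cdot)$ is only remarked in the proof of Proposition \ref{polarization}, it is not its content; and in the Calder\'on--Zygmund step you still need to identify the extended operator with the pointwise $\gamma(H,B)$-valued kernel representation, as the paper does around (\ref{B3}) and (\ref{B4}), so that what you bound is indeed $\mathcal{G}_{\mathcal{L}+\alpha,\sigma,B}$ on all of $L^p(\mathbb{R}^n,B)$.
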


Note that if $f\in L^p(\mathbb{R}^n)$, $1<p<\infty$, then
$$
||\mathcal{G}_{\mathcal{L},\sigma , \mathbb{C}}(f)||_{\gamma (H,\mathbb{C})}=g_{\mathcal{L},\sigma }(f),
$$
and (\ref{fcomgfu}) is an extension of  (\ref{acotgfnf}), because $\gamma (H,\mathbb{C})=H$.

The Hermite operator $L$ can be factorized as follows
$$
L=-\frac{1}{2}\sum_{j=1}^n\left[\left(\frac{\partial}{\partial x_j}+x_j\right)\left(\frac{\partial}{\partial x_j}-x_j\right)+\left(\frac{\partial}{\partial x_j}-x_j\right)\left(\frac{\partial}{\partial x_j}+x_j\right)\right].
$$
The creation operator $-\nabla +x$ and the annihilation operator $\nabla +x$ play an important role in the harmonic analysis for the Hermite operator.

We define, for every $j=1,...,n$, the operators
$$
T_{j,\pm}^\mathcal{L}(f)=t\left(\frac{\partial}{\partial x_j}\pm x_j\right)P_t^\mathcal{L}(f),\quad f\in L^2(\mathbb{R}^n,B).
$$
$L^p$-boundedness properties of the operators $T^\mathcal{L}_{j,\pm}$ are now established.

\begin{teo}\label{Teo2}
Let $B$ be a UMD Banach space and $j=1,...,n$. The operators $T_{j,\pm}^{\mathcal{L}}$ are bounded from $L^p(\mathbb{R}^n,B)$ into  $L^p(\mathbb{R}^n,\gamma (H,B))$, for every $1<p<\infty$. In the case of $T_{j,-}^\mathcal{L}$ we assume also that $n\geq 3$.
\end{teo}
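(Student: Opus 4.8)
The plan is to factor each operator $T_{j,\pm}^{\mathcal{L}}$ through the square function operator $\mathcal{G}_{\mathcal{L}+\alpha ,B}$ of Theorem~\ref{Teo1} with the shift $\alpha =\pm 2$, composed with a first order Hermite Riesz transform. Write $\delta_j^{\pm}=\frac{\partial}{\partial x_j}\pm x_j$ and let $e_j\in\mathbb{N}^n$ denote the $j$-th canonical vector. The one dimensional recurrences for Hermite functions give, for $k\in\mathbb{N}^n$,
\[
\delta_j^{+}h_k=\sqrt{2k_j}\;h_{k-e_j},\qquad \delta_j^{-}h_k=-\sqrt{2(k_j+1)}\;h_{k+e_j}
\]
(with the convention $h_{k-e_j}=0$ if $k_j=0$), and we set $\mathcal{R}_j^{\pm}=\delta_j^{\pm}\mathcal{L}^{-1/2}$, which is a well defined bounded operator on $L^2(\mathbb{R}^n)$ because $\mathcal{L}$ has least eigenvalue $n>0$.

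First I would establish the algebraic identity
\[
T_{j,\pm}^{\mathcal{L}}(f)=-\,\mathcal{G}_{\mathcal{L}\pm 2,B}\bigl(\mathcal{R}_j^{\pm}f\bigr)
\]
for $f$ a finite linear combination of functions $h_k\otimes b$ with $k\in\mathbb{N}^n$ and $b\in B$ (a dense subspace of $L^p(\mathbb{R}^n,B)$). Indeed, since the Poisson kernel is scalar valued, $P_t^{\mathcal{L}}(h_k\otimes b)=e^{-t\sqrt{2|k|+n}}\,h_k\otimes b$, so applying $\delta_j^{\pm}$ term by term, re-indexing the sum, and using that on the relevant eigenspaces $(\mathcal{L}\pm 2)^{-1/2}\delta_j^{\pm}=\delta_j^{\pm}\mathcal{L}^{-1/2}$, the eigenvalue factor $\sqrt{2|k|+n}$ appears exactly where it is needed to match formula~(\ref{gint}) (with $\sigma =1$) for the shifted operator $\mathcal{L}\pm 2$. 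This is precisely where the hypothesis $n\geq 3$ is needed for $T_{j,-}^{\mathcal{L}}$: the operator $\mathcal{G}_{\mathcal{L}-2,B}$, and the Poisson semigroup $P_t^{\mathcal{L}-2}$ behind it, are available only under the standing assumption $\alpha >-n$ of Theorem~\ref{Teo1}, which for $\alpha =-2$ forces $n>2$; for $T_{j,+}^{\mathcal{L}}$ one takes $\alpha =2$ and there is no restriction.

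By Theorem~\ref{Teo1}$(i)$ the operator $\mathcal{G}_{\mathcal{L}\pm 2,B}$ is bounded from $L^p(\mathbb{R}^n,B)$ into $L^p(\mathbb{R}^n,\gamma (H,B))$ for every $1<p<\infty$, so it remains to prove that $\mathcal{R}_j^{\pm}$ is bounded on $L^p(\mathbb{R}^n,B)$. Using the subordination formula $\mathcal{L}^{-1/2}=\pi^{-1/2}\int_0^\infty t^{-1/2}W_t^{\mathcal{L}}\,dt$ and differentiating Mehler's kernel, $\mathcal{R}_j^{\pm}$ is the integral operator with the scalar kernel $K_j^{\pm}(x,y)=\pi^{-1/2}\int_0^\infty t^{-1/2}\,\delta_{j,x}^{\pm}W_t^{\mathcal{L}}(x,y)\,dt$; one checks that $\mathcal{R}_j^{\pm}$ is bounded on $L^2(\mathbb{R}^n)$ (immediate from the spectral description) and that $K_j^{\pm}$ satisfies the standard Calderón--Zygmund size and smoothness estimates off the diagonal. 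Since the kernel is scalar valued, the vector valued Calderón--Zygmund theory then gives the boundedness of $\mathcal{R}_j^{\pm}$ on $L^p(\mathbb{R}^n,B)$ for \emph{every} Banach space $B$ and every $1<p<\infty$; in particular the UMD hypothesis enters the proof of Theorem~\ref{Teo2} only through Theorem~\ref{Teo1}. Composing the two bounds, and extending the identity above from the dense subspace to all of $L^p(\mathbb{R}^n,B)$ by continuity, finishes the proof.

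I expect the main obstacle to be the Calderón--Zygmund estimates for $K_j^{\pm}$: the Mehler kernel $W_t^{\mathcal{L}}(x,y)$ degenerates both as $t\to 0^+$ and as $t\to\infty$, so the size bound $|K_j^{\pm}(x,y)|\leq C|x-y|^{-n}$ and the corresponding Hölder estimates require splitting the $t$-integral around $t\sim |x-y|^2$ and, for the global part, exploiting the Gaussian decay in $|x|^2+|y|^2$, in the spirit of the known treatments of the Hermite Riesz transforms (see, e.g., \cite{Th2}); once these pointwise kernel bounds are available the rest is routine.
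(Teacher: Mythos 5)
Your factorization $T_{j,\pm}^{\mathcal{L}}f=\mp\,\mathcal{G}_{\mathcal{L}\pm 2,B}(R_j^{\pm}f)$ (signs aside, they are irrelevant for norms) is exactly the identity the paper takes from \cite[Lemmas 4.1 and 4.2]{StTo}, and your explanation of why $n\geq 3$ is needed for $T_{j,-}^{\mathcal{L}}$ (so that $\alpha=-2>-n$ in Theorem \ref{Teo1}) is correct. The gap is in the second half: you claim that, because the kernel $K_j^{\pm}(x,y)$ of $R_j^{\pm}=\delta_j^{\pm}\mathcal{L}^{-1/2}$ is scalar valued and satisfies the standard Calder\'on--Zygmund size and smoothness estimates, vector-valued Calder\'on--Zygmund theory gives boundedness of $R_j^{\pm}$ on $L^p(\mathbb{R}^n,B)$ for \emph{every} Banach space $B$, so that UMD would enter only through Theorem \ref{Teo1}. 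This step fails. The operator-valued (or tensor-extension) Calder\'on--Zygmund theorem of Benedek--Calder\'on--Panzone/Rubio de Francia--Ruiz--Torrea requires, as a hypothesis, that the operator be already bounded on $L^q(\mathbb{R}^n,B)$ for \emph{some} $q$; the kernel conditions only extrapolate that bound to all $1<p<\infty$ (and to weak $(1,1)$). Scalar $L^2(\mathbb{R}^n)$ boundedness plus a nice scalar kernel does not produce the initial $B$-valued bound: the Hilbert transform is the canonical counterexample, since its $B$-valued extension is bounded precisely when $B$ is UMD --- that is the definition of UMD recalled in the Introduction. The same phenomenon holds for the Hermite Riesz transforms: their $L^p(\mathbb{R}^n,B)$ boundedness is essentially equivalent to UMD (this is the content of \cite[Theorem 2.3]{AT}, and it is also why Theorem \ref{Teo3} can characterize UMD through $T_{j,\pm}^{\mathcal{L}}$). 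So the UMD hypothesis must be invoked a second time, for the Riesz transform factor, as the paper does by citing \cite[Theorem 2.3]{AT}; with that substitution your argument matches the paper's proof.

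A smaller point: composing the two bounds and ``extending by continuity'' from finite combinations of $h_k\otimes b$ only produces a bounded extension of $T_{j,\pm}^{\mathcal{L}}$. To conclude that this extension actually agrees with $t(\frac{\partial}{\partial x_j}\pm x_j)P_t^{\mathcal{L}}(f)$ for a general $f\in L^p(\mathbb{R}^n,B)$, the paper proves the pointwise kernel bound $t\left|\left(\frac{\partial}{\partial x_j}\pm x_j\right)P_t^{\mathcal{L}}(x,y)\right|\leq Ct(t+|x-y|)^{-n-2}$ and repeats the identification argument of Subsection 2.2; your sketch omits this identification step, which is routine but should be recorded.
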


We show a new characterization of the UMD Banach spaces.

\begin{teo}\label{Teo3}
Let $B$ be a Banach space. The following assertions are equivalent.

(i) $B$ is UMD.

(ii) For some (equivalently, for every) $1<p<\infty$ and $j=1,...,n$, there exists $C>0$ such that, for every $f\in L^p(\mathbb{R}^n)\otimes B$,
 $$
 ||f||_{L^p(\mathbb{R}^n,B)}\leq C||\mathcal{G}_{\mathcal{L}+2,B}(f)||_{L^p(\mathbb{R}^n,\gamma (H,B))},
$$
and
$$
||T_{j,+}^\mathcal{L}f||_{L^p(\mathbb{R}^n,\gamma (H,B))}\leq C||f||_{L^p(\mathbb{R}^n,B)}.
$$

(iii)  For some (equivalently, for every) $1<p<\infty$ and $j=1,...,n$, there exists $C>0$ such that, for every $f\in L^p(\mathbb{R}^n)\otimes B$,
 $$
 ||f||_{L^p(\mathbb{R}^n,B)}\leq C||\mathcal{G}_{\mathcal{L}-2,B}(f)||_{L^p(\mathbb{R}^n,\gamma (H,B))},
 $$
and
$$
||T_{j,-}^\mathcal{L}f||_{L^p(\mathbb{R}^n,\gamma (H,B))}\leq C||f||_{L^p(\mathbb{R}^n,B)},
$$
where $n\geq 3$.
\end{teo}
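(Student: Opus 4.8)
The forward implications $(i)\Rightarrow(ii)$ and $(i)\Rightarrow(iii)$ are immediate from the previous results: if $B$ is UMD, the estimate $\|f\|_{L^p(\mathbb{R}^n,B)}\leq C\|\mathcal{G}_{\mathcal{L}+\alpha,\sigma,B}(f)\|_{L^p(\mathbb{R}^n,\gamma(H,B))}$ contained in $(\ref{fcomgfu})$, applied with $\sigma=1$ and $\alpha=2$ (resp.\ $\alpha=-2$, which is admissible exactly because $-2>-n$, i.e.\ $n\geq3$), gives the first inequality in $(ii)$ (resp.\ $(iii)$), while Theorem~\ref{Teo2} gives the boundedness of $T_{j,+}^{\mathcal{L}}$ (resp.\ $T_{j,-}^{\mathcal{L}}$, again requiring $n\geq3$) from $L^p(\mathbb{R}^n,B)$ into $L^p(\mathbb{R}^n,\gamma(H,B))$. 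The substance of the theorem is the converses, and the plan is to deduce from the two conditions in $(ii)$ (resp.\ $(iii)$) the boundedness on $L^p(\mathbb{R}^n,B)$ of a first-order Hermite--Riesz transform, and then to extract the UMD property from that.

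The exponents $\pm2$ are forced by the creation/annihilation calculus. Writing $A_j=\partial_{x_j}+x_j$ and $A_j^{*}=-\partial_{x_j}+x_j$, the formulas $A_jh_k=\sqrt{2k_j}\,h_{k-e_j}$, $A_j^{*}h_k=\sqrt{2(k_j+1)}\,h_{k+e_j}$ and $\mathcal{L}h_k=(2|k|+n)h_k$ yield the intertwining relations $A_j\mathcal{L}=(\mathcal{L}+2)A_j$ and $A_j^{*}\mathcal{L}=(\mathcal{L}-2)A_j^{*}$, hence $A_j\varphi(\mathcal{L})=\varphi(\mathcal{L}+2)A_j$ and $A_j^{*}\varphi(\mathcal{L})=\varphi(\mathcal{L}-2)A_j^{*}$ for suitable functions $\varphi$; in particular $A_jP_t^{\mathcal{L}}=P_t^{\mathcal{L}+2}A_j$, $A_j\mathcal{L}^{-1/2}=(\mathcal{L}+2)^{-1/2}A_j$, and the analogous identities for $A_j^{*}$ with $\mathcal{L}-2$ in place of $\mathcal{L}+2$ (the hypothesis $n\geq3$ being precisely what makes $\mathcal{L}-2$ positive and invertible). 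Set $R_j=A_j\mathcal{L}^{-1/2}$ and $\widetilde{R}_j=A_j^{*}\mathcal{L}^{-1/2}$.

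I would argue on the dense subspace of $L^p(\mathbb{R}^n,B)$ formed by the finite sums $\sum_k h_k\otimes b_k$, which lies in $L^p(\mathbb{R}^n)\otimes B$ (so the hypotheses apply) and is stable under $R_j$, $\widetilde{R}_j$, $P_t^{\mathcal{L}+\alpha}$, $A_j$ and $A_j^{*}$. On such an $f$, using the intertwining relations and $\partial_tP_t^{\mathcal{L}+2}=-(\mathcal{L}+2)^{1/2}P_t^{\mathcal{L}+2}$, a direct computation gives
$$
\mathcal{G}_{\mathcal{L}+2,B}(R_jf)(x,t)=t\,\partial_tP_t^{\mathcal{L}+2}\big((\mathcal{L}+2)^{-1/2}A_jf\big)(x)=-t\,A_jP_t^{\mathcal{L}}f(x)=-T_{j,+}^{\mathcal{L}}(f)(x,t),
$$
and, in the same way, $\mathcal{G}_{\mathcal{L}-2,B}(\widetilde{R}_jf)=T_{j,-}^{\mathcal{L}}(f)$ when $n\geq3$. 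Inserting this into the two inequalities of $(ii)$ then gives
$$
\|R_jf\|_{L^p(\mathbb{R}^n,B)}\leq C\|\mathcal{G}_{\mathcal{L}+2,B}(R_jf)\|_{L^p(\mathbb{R}^n,\gamma(H,B))}=C\|T_{j,+}^{\mathcal{L}}(f)\|_{L^p(\mathbb{R}^n,\gamma(H,B))}\leq C\|f\|_{L^p(\mathbb{R}^n,B)},
$$
for every $f$ in the dense class, so $R_j$ extends to a bounded operator on $L^p(\mathbb{R}^n,B)$; $(iii)$ yields the same for $\widetilde{R}_j$.

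Finally I would pass from the boundedness of $R_j$ (resp.\ $\widetilde{R}_j$) on $L^p(\mathbb{R}^n,B)$ to the UMD property by transference. Using the Mehler-type bounds for the Hermite semigroup, $R_j$ is a $B$-valued singular integral whose kernel is a nonzero constant multiple of the classical Riesz kernel $(x_j-y_j)|x-y|^{-n-1}$ plus a remainder which is locally integrable near the diagonal (with enough decay away from it, thanks to the $|x|^2$ potential) and therefore defines an operator bounded on $L^p(\mathbb{R}^n,B)$ for \emph{every} Banach space $B$; equivalently, a dilation/blow-up of $R_j$ converges to a multiple of the $j$-th classical Riesz transform $\mathcal{R}_j$. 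Hence $\mathcal{R}_j$ is bounded on $L^p(\mathbb{R}^n,B)$, and since the boundedness on $L^p(\mathbb{R}^n,B)$ of the Riesz transforms --- already of a single one, by transference to the Hilbert transform on $\mathbb{R}$ --- characterizes the UMD spaces (\cite{Bo1}, \cite{Bu2}), $B$ is UMD. This transference step, which strips off the harmonic-oscillator perturbation and isolates the Calder\'on--Zygmund part of the Riesz transform, is the point where I expect the real work to lie; everything else is bookkeeping with the creation/annihilation identities and the spectral calculus.
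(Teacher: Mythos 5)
Your proof of the converse implications follows the paper's route. The identity you derive from the creation/annihilation intertwinings, $\mathcal{G}_{\mathcal{L}\pm 2,B}(R_j^{\pm}f)=\mp T_{j,\pm}^{\mathcal{L}}(f)$, is exactly (\ref{18.1}) (quoted in the paper from \cite[Lemmas 4.1 and 4.2]{StTo}), and chaining the two inequalities of $(ii)$ (resp.\ $(iii)$) through it to get $\|R_j^{+}f\|_{L^p(\mathbb{R}^n,B)}\leq C\|f\|_{L^p(\mathbb{R}^n,B)}$ on a dense class is precisely what the paper does (it works with $f\in L^p(\mathbb{R}^n)\otimes B$, using the scalar $L^p$-boundedness of $R_j^{\pm}$ from \cite[Corollary 3.4]{StTo} so that the first inequality may be applied to $R_j^{\pm}f$; your Hermite-span class is an equally valid choice). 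The forward implications via Theorems \ref{Teo1} and \ref{Teo2}, including the role of $n\geq 3$ for $\alpha=-2$ and $T_{j,-}^{\mathcal{L}}$, are also as in the paper.

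Where you diverge is the very last step, and there your sketch has a genuine gap. The paper concludes by invoking \cite[Theorem 2.3]{AT}, which is an equivalence, so the $L^p(\mathbb{R}^n,B)$-boundedness of a Hermite--Riesz transform immediately yields that $B$ is UMD; no transference is carried out in this paper. Your substitute argument is not correct as stated: one cannot write $R_j$ globally as a constant multiple of the classical Riesz transform plus an operator bounded on $L^p(\mathbb{R}^n,B)$ for \emph{every} Banach space $B$. Away from the diagonal the Hermite--Riesz kernel decays (essentially exponentially, thanks to the potential), while $c(x_j-y_j)|x-y|^{-n-1}$ decays only like $|x-y|^{-n}$, which is not integrable at infinity; so your ``remainder'' contains, up to an integrable error, the classical Riesz kernel truncated away from the diagonal, and a truncated Riesz (or Hilbert) transform bounded on $L^p(\mathbb{R}^n,B)$ already forces $B$ to be UMD (conjugating by dilations gives all truncation scales with the same norm, and letting the scale tend to $0$ recovers the full transform), so it is certainly not bounded for arbitrary $B$. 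The kernel comparison is therefore only legitimate after localizing to the region $|x-y|\lesssim (1+|x|+|y|)^{-1}$, and passing from the localized classical operator back to the genuine Riesz transform is exactly the scaling/transference work you defer to --- i.e.\ the content of \cite[Theorem 2.3]{AT}. If you simply quote that theorem, as the paper does, your proof is complete; as sketched, the final step does not stand on its own.
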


This paper is organized as follows. In Section 2 we prove Theorem \ref{Teo1}. Theorem \ref{Teo2} and Theorem \ref{Teo3} are shown in Section 3 and Section 4, respectively. It is remarkable that in order to prove our results we use a different procedure to the one employed in \cite{KW}. In the proof of Kaiser and Weis's results (\cite{KW}) a Mikhlin type Fourier multiplier theorem (\cite[Theorem 2.12]{KW}) plays a key role. We do  not have a vector-valued multiplier theorem for the Hermite expansions. In a first step we prove the boundedness of our operators from $L^2(\mathbb{R}^n,B)$ to $L^2(\mathbb{R}^n,\gamma (H,B))$ by making a suitable comparison with the corresponding classical and Ornstein-Uhlenbeck operators and then by employing \cite[Theorem 4.2, (a), with $p=2$]{KW}. The $L^p$-boundedness properties for every $1<p<\infty$ is showed by using vector-valued Calder\'on-Zygmund theory for singular integrals. To prove the characterization of UMD Banach spaces in Theorem \ref{Teo3} we use a Cauchy-Riemann type equations in the Hermite setting which connect $\mathcal{G}_\mathcal{L}$-operators with $T^\mathcal{L}$-operators (\cite[Section 4]{StTo}).

Throughout this paper by $C$ and $c$ we always represent positive constants that can change from a line to the other one.

\section{Proof of Theorem \ref{Teo1}}
We divide the proof of Theorem \ref{Teo1} in three steps.

\subsection{} Our first objective is to see that $\mathcal{G}_{\mathcal{L}+\alpha,\sigma ,B}$ is a bounded operator from $L^2(\mathbb R^n,B)$ into $L^2(\mathbb R^n,\gamma(H,B))$.

It is known (see \cite{AT}) that harmonic analysis for the Hermite and Ornstein-Uhlenbeck operators are closely connected. We exploit this idea in order to show our objective.

The Ornstein-Uhlenbeck operator is defined on $\mathbb R^n$ by
$$
\mathfrak{O}=-\Delta + 2x\cdot \nabla.
$$
If $k=(k_1,\ldots,k_n)\in \mathbb N^n$ and $H_k(x)=\prod_{j=1}^nH_{k_j}(x_j)$, $x=(x_1,\ldots,x_n) \in \mathbb R^n$, then
$$
\mathfrak{O}H_k=2|k|H_k.
$$

The system $\{\tilde{H_k}\}_{k\in\mathbb N^n}$, where $\tilde{H_k}=\frac{2^{-|k|/2}}{k!^{\frac{1}{2}}}H_k$, ($k!=k_1!...k_n!$), $k=(k_1,...,k_n) \in \mathbb{N}^n$, is orthonormal and complete in $L^2(\mathbb R^n,d\lambda)$ where $d\lambda=\pi^{-\frac{n}{2}}e^{-|x|^2}dx$ is the Gaussian measure on $\mathbb{R}^n$. We define an extension $\mathcal{O}$ of the Ornstein-Uhlenbeck operator as follows
$$
\mathcal{O}f=\sum_{k\in \mathbb N^n}2|k|\langle f,\tilde{H_k}\rangle_{L^2(\mathbb R^n,d\lambda)}\tilde{H_k},\;\;f \in D(\mathcal{O}),
$$
where $D(\mathcal{O})=\{f\in L^2(\mathbb R^n,d\lambda):\sum_{k\in \mathbb N^n}(|k||\langle f,\tilde{H_k}\rangle_{L^2(\mathbb R^n,d\lambda)}|)^2<\infty\}$ and $\langle\cdot,\cdot \rangle_{L^2(\mathbb R^n,d\lambda)}$ denotes the usual inner product in $L^2(\mathbb R^n, d\lambda)$.

The semigroup $\{W_t^{\mathcal{O}}\}_{t>0}$ of operators generated by the operator $-\mathcal{O}$ is defined by
$$
W_t^{\mathcal{O}}(f)= \sum_{k\in\mathbb N^n}e^{-2|k|t}\langle f,\tilde{H_k}\rangle_{L^2(\mathbb R^n,d\lambda)}\tilde{H_k},\;\;t>0\;\;\mbox{and}\;\;f\in L^2(\mathbb R^n,d\lambda).
$$
According to the Mehler's formula (\cite[(1.1.36)]{Th2}) we can write, for every $f\in L^2(\mathbb R^n,d\lambda)$
\begin{equation}\label{OUKernel}
W_t^{\mathcal{O}}(f)(x)=\int_{\mathbb R^n}W_t^{\mathcal{O}}(x,y)f(y)dy,\;\; x\in \mathbb R^n\;\;\mbox{and}\;\;t>0,
\end{equation}
being
$$
W_t^{\mathcal{O}}(x,y)=\frac{1}{\pi^{\frac{n}{2}}(1-e^{-4t})^{\frac{n}{2}}}\exp\left(-\frac{|e^{-2t}x-y|^2}{1-e^{-4t}}\right), \quad x,y \in \mathbb R^n \mbox{ and }t>0.
$$
 $\{W_t^{\mathcal{O}}\}_{t>0}$, where $W_t^{\mathcal{O}}$ is defined by (\ref{OUKernel}) for every $t>0$, is a diffusion semigroup (in the sense of Stein \cite{Stein}).

Let $\beta \geq 0$. We define, for every $t>0$,
$$
W_t^{\mathcal{O}+\beta}(f)=e^{-\beta t}W_t^{\mathcal{O}}(f),\;\;f\in L^p(\mathbb R^n, d\lambda),\;\; 1 \leq p \leq \infty.
$$
The semigroup $\{W_t^{\mathcal{O}+\beta}\}_{t>0}$ of operators is generated by $-(\mathcal{O}+\beta)$ in $L^p(\mathbb R^n,d\lambda)$, $1\leq p\leq \infty$. The Poisson semigroup $\{P_t^{\mathcal{O}+\beta}\}_{t>0}$ associated with $\mathcal{O}+\beta$ is given, by using the subordination formula, as follows
$$
P^{\mathcal{O}+\beta}_t(f)=\frac{t}{\sqrt{4\pi}}\int^\infty_0u^{-\frac{3}{2}}e^{-\frac{t^2}{4u}-\beta u}W_u^{\mathcal{O}}(f)du,\;\;f\in L^p(\mathbb R^n,d\lambda),\;t>0\mbox{ and }1\leq p \leq \infty.
$$
For every $t>0$ the operators $W^{\mathcal{O}+\beta}_t$ and $P^{\mathcal{O}+\beta}_t$ are defined on $L^p(\mathbb R^n,d\lambda ,B)$, $1\leq p \leq \infty$, in a natural way, the integrals being understood in the $B$-Bochner sense.

We consider the operator
$$
\mathcal{G}_{\mathcal{O}+\beta,\sigma ,B}(f)(x,t)=t^\sigma  \partial_t^\sigma P_t^{\mathcal{O}+\beta}(f)(x),\;\; f\in L^p(\mathbb R^n,d\lambda ,B),\;\;1<p<\infty, \;x\in \mathbb R^n \;\;\mbox{and}\;\; t>0,
$$
where $\sigma >0$. Harboure, Torrea and Viviani \cite{HTV1} and Mart{\'\i}nez, Torrea and Xu \cite{MTX} have investigated $g$-functions associated to Ornstein-Uhlenbeck operator $\mathcal{O}$ in Banach valued settings. They consider the $g$-functions defined by
$$
G_{\mathcal{O},B}^q(f)(x)=\left(\int_0^\infty \Big\|t  \frac{\partial}{\partial t}P_t^{\mathcal{O}}(f)(x) \big\|_B^q\frac{dt}{t}\right)^{\frac{1}{q}},\;\;q \in (1,\infty).
$$
Note that $G_{\mathcal{O},B}^q(f)=||\mathcal{G}_{\mathcal{O},1,B}(f)||_{L^q((0,\infty ),\frac{dt}{t},B)}$. Our study follows a different way, by using $\gamma$-radonifying norms.

In order to prove Theorem \ref{Teo1} we previously need to show the following results. Let us denote by $P_t^{-\Delta}(z)$, $z\in\mathbb{R}^n$ , $t>0$, the classical Poisson kernel given by
$$
P_t^{-\Delta }(z)=b_n\frac{t}{(t^2+|z|^2)^{\frac{n+1}{2}}},\quad z\in \mathbb{R}^n \mbox{ and }t>0,
$$
where $b_n=\pi ^{-\frac{n+1}{2}}\Gamma \Big(\frac{n+1}{2}\Big)$.
\begin{lema}\label{LemaA}
Let $\sigma >0$. Then
$$
t^\sigma \partial _t^\sigma P_t^{-\Delta}(z)=\sum_{k=0}^{\frac{m+1}{2}}\frac{c_k}{t^n}\varphi ^k\left(\frac{z}{t}\right),\quad z\in \mathbb{R}^n\mbox{ and }t>0,
$$
where $m$ is the smallest integer which strictly exceeds $\sigma$, and, for every $k\in \mathbb{N}$, $0\leq k\leq\frac{m+1}{2}$, $c_k\in \mathbb{C}$ and
$$
\varphi ^k(z)=\int_0^\infty \frac{(1+v)^{m+1-2k}v^{m-\sigma -1}}{((1+v)^2+|z|^2)^{\frac{n+2(m-k)+1}{2}}}dv,\quad z\in \mathbb{R}^n.
$$
\end{lema}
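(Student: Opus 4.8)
The plan is to reduce Segovia--Wheeden's fractional $t$-derivative to an ordinary one by a scaling, and then to understand the precise algebraic shape of the $m$-th $t$-derivative of the classical Poisson kernel.

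First I would use that, since $m$ is the smallest integer strictly larger than $\sigma$,
\begin{equation*}
\partial_t^\sigma P_t^{-\Delta}(z)=\frac{e^{-i\pi(m-\sigma)}}{\Gamma(m-\sigma)}\int_0^\infty\Big[\partial_\tau^m P_\tau^{-\Delta}(z)\Big]_{\tau=t+s}\,s^{m-\sigma-1}\,ds,
\end{equation*}
and then perform the change of variables $s=tv$ to obtain
\begin{equation*}
t^\sigma\partial_t^\sigma P_t^{-\Delta}(z)=\frac{e^{-i\pi(m-\sigma)}}{\Gamma(m-\sigma)}\,t^{\,m}\int_0^\infty\Big[\partial_\tau^m P_\tau^{-\Delta}(z)\Big]_{\tau=t(1+v)}\,v^{m-\sigma-1}\,dv .
\end{equation*}
The absolute convergence of this integral (for $z\neq 0$: near $v=0$ since $m-\sigma-1>-1$, and for $v\to\infty$ from the decay of $\partial_\tau^m P_\tau^{-\Delta}(z)$ obtained below) makes all the manipulations licit.

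The algebraic core is the following claim, proved by induction on $l\in\mathbb{N}$: with $a=\frac{n+1}{2}$,
\begin{equation*}
\partial_\tau^l\big(\tau(\tau^2+|z|^2)^{-a}\big)=\sum_{k=0}^{\lfloor(l+1)/2\rfloor}a_k^{(l)}\,\tau^{\,l+1-2k}\,(\tau^2+|z|^2)^{-a-(l-k)},\qquad a_k^{(l)}\in\mathbb{C}.
\end{equation*}
The base case $l=0$ is immediate. For the inductive step one differentiates a single summand $\tau^{\,l+1-2k}(\tau^2+|z|^2)^{-a-(l-k)}$: this produces exactly one term having the shape of the index $k+1$ at level $l+1$, with coefficient proportional to $l+1-2k$, and one term having the shape of the index $k$ at level $l+1$. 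The only point requiring care is that the first term could a priori fall outside the asserted range of indices, but this occurs solely in the borderline case $l$ odd, $k=(l+1)/2$, for which the coefficient $l+1-2k$ vanishes; the second term is always within range. (Here $\lfloor(l+1)/2\rfloor$ coincides with the value $\tfrac{l+1}{2}$ written in the statement, integer part understood.) Incidentally, each summand is $O(\tau^{-n-l})$ as $\tau\to\infty$, which supplies the decay invoked above.

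Finally I would apply this claim with $l=m$ to $P_\tau^{-\Delta}(z)=b_n\,\tau(\tau^2+|z|^2)^{-(n+1)/2}$, substitute into the displayed identity for $t^\sigma\partial_t^\sigma P_t^{-\Delta}(z)$, and in each summand factor $t^2$ out of $t^2(1+v)^2+|z|^2$. A direct exponent count shows that the net power of $t$ in the $k$-th summand equals $-n$, and the remaining $v$-integral is precisely $\varphi^k(z/t)$; setting $c_k=\frac{e^{-i\pi(m-\sigma)}b_n}{\Gamma(m-\sigma)}\,a_k^{(m)}$ completes the proof. The main obstacle is not any single step but the bookkeeping in the induction — in particular the vanishing of $l+1-2k$ at the top index, which is exactly what keeps the sum finite within the stated range.
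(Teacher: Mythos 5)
Your proof is correct. The overall skeleton is the same as the paper's: write out the Segovia--Wheeden integral, substitute $s=tv$, and recognize that $\partial_\tau^m\bigl(\tau(\tau^2+|z|^2)^{-\frac{n+1}{2}}\bigr)$ is a finite sum of terms $\tau^{m+1-2k}(\tau^2+|z|^2)^{-\frac{n+2(m-k)+1}{2}}$, after which the exponent count giving the net factor $t^{-n}$ is immediate. Where you differ is in how that expansion is obtained: the paper first rewrites $\frac{t+s}{((t+s)^2+|z|^2)^{(n+1)/2}}$ as $-\frac{1}{n-1}\partial_t\bigl[((t+s)^2+|z|^2)^{-(n-1)/2}\bigr]$ and then invokes an explicit $(m+1)$-fold differentiation formula from the reference \cite{GLLNU} (with Hermite-polynomial-type coefficients $E_{m+1,k}$), which forces a separate treatment of $n=1$ via $\ln((t+s)^2+|z|^2)$. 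Your direct induction on the order of differentiation establishes the same structural form uniformly in $n$, without the external citation and without the $n=1$ case split; the price is that you only get the shape of the coefficients $a_k^{(m)}$ rather than a closed formula, but the lemma only asserts $c_k\in\mathbb{C}$, so nothing is lost. Your observation that the boundary term at $k=(l+1)/2$ (for $l$ odd) carries the vanishing coefficient $l+1-2k=0$ is exactly the point that keeps the index range correct, and your decay estimate $O(\tau^{-n-m})$ justifies the convergence of the $v$-integral.
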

\begin{proof}
Let $m$ be the smallest integer which strictly exceeds $\sigma$. We have that
$$
\partial_t^\sigma P_t^{-\Delta}(z)=b_n\frac{e^{-i\pi (m-\sigma)}}{\Gamma (m-\sigma )}\int_0^\infty \frac{\partial ^m}{\partial t^m}\left[\frac{t+s}{((t+s)^2+|z|^2)^{\frac{n+1}{2}}}\right]s^{m-\sigma -1}ds,\quad z\in \mathbb{R}^n\mbox{ and }t>0.
$$

Suppose that $n>1$. According to \cite[(4.6)]{GLLNU} we get
\begin{eqnarray}\label{Dm}
\frac{\partial ^m}{\partial t^m}\left[\frac{t+s}{((t+s)^2+|z|^2)^{\frac{n+1}{2}}}\right]&=&-\frac{1}{n-1}\frac{\partial ^{m+1}}{\partial t^{m+1}}\left[\frac{1}{((t+s)^2+|z|^2)^{\frac{n-1}{2}}}\right]\nonumber\\
&\hspace{-6cm}=&\hspace{-3cm}\sum_{k=0}^{\frac{m+1}{2}}(-1)^{m-k}E_{m+1,k}(t+s)^{m+1-2k}\frac{(n+1)(n+3)\cdots(n+2(m-k)-1)}{2^{m+1-k}((t+s)^2+|z|^2)^{\frac{n+2(m-k)+1}{2}}},\quad z\in \mathbb{R}^n, t,s>0,
\end{eqnarray}
where $E_{m+1, k}=\frac{2^{m+1-2k}(m+1)!}{k!(m+1-2k)!}$, $0\leq k\leq \frac{m+1}{2}$.

Moreover, after the change of variables $s=tv$, for every $k\in\mathbb{N}$, $0\leq k\leq \frac{m+1}{2}$, we obtain
$$
\int_0^\infty \frac{(t+s)^{m+1-2k}s^{m-\sigma -1}}{((t+s)^2+|z|^2)^{\frac{n+2(m-k)+1}{2}}}ds=t^{-n-\sigma}\int_0^\infty \frac{(1+v)^{m+1-2k}v^{m-\sigma -1}}{((1+v)^2+(|z|/t)^2)^{\frac{n+2(m-k)+1}{2}}}dv,
$$
for each $z\in \mathbb{R}^n$ and $t>0$.

If $n=1$ by taking into account that
\begin{equation}\label{Dmbis}
\frac{\partial ^m}{\partial t^m}\left[\frac {t+s}{(t+s)^2+|z|^2}\right]=\frac{1}{2}\frac{\partial ^{m+1}}{\partial t^{m+1}}[\ln ((t+s)^2+|z|^2)],\quad z\in \mathbb{R}^n\mbox{ and }t,s>0,
\end{equation}
we can proceed as above.
\end{proof}

As usual we define the Fourier transform $\widehat{f}$ of $f\in L^1(\mathbb{R}^n)$ by
$$
\widehat{f}(x)=\int_{\mathbb{R}^n}e^{-ix\cdot y}f(y)dy,\quad x\in \mathbb{R}^n.
$$
It is wellknown that $\widehat{P_t^{-\Delta}}(x)=e^{-t|x|}$, $x\in \mathbb{R}^n$ and $t>0$. It is not hard to see that, for every $\sigma >0$, $\partial_t^\sigma e^{-t|x|}=e^{i\pi \sigma }|x|^\sigma e^{-t|x|}$, $x\in \mathbb{R}^n$ and $t>0$.

\begin{lema}\label{LemaB}
Let $\sigma >0$. Then
$$
 \widehat{\partial_t^\sigma P_t^{-\Delta}}(x)=e^{i\pi \sigma }|x|^\sigma e^{-t|x|}, \quad x\in \mathbb{R}^n \mbox{ and }t>0.
$$
\end{lema}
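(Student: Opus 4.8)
The plan is to compute the Fourier transform of $\partial_t^\sigma P_t^{-\Delta}$ directly from the Segovia--Wheeden definition of the fractional derivative, using the fact that the Fourier transform converts the $t$-translations appearing in that definition into multiplication by $e^{-s|x|}$. Let $m$ be the smallest integer strictly exceeding $\sigma$. By definition,
$$
\partial_t^\sigma P_t^{-\Delta}(z)=\frac{e^{-i\pi(m-\sigma)}}{\Gamma(m-\sigma)}\int_0^\infty \frac{\partial^m}{\partial t^m}P_{t+s}^{-\Delta}(z)\,s^{m-\sigma-1}\,ds,
$$
so the first step is to justify that the Fourier transform (in $z$) commutes with both the $s$-integral and the $m$-fold $t$-derivative. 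For the former one uses that, by Lemma \ref{LemaA} together with the bound $s^{m-\sigma-1}$ near $s=0$ and the decay of $\partial^m_t P^{-\Delta}_{t+s}$ as $s\to\infty$, the integrand is dominated by an $L^1(\mathbb{R}^n)$-valued function integrable in $s$; for the latter one differentiates under the integral sign in the inversion formula, using that $\widehat{P_u^{-\Delta}}(x)=e^{-u|x|}$ is smooth in $u>0$ with derivatives bounded locally uniformly.

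Granting these interchanges, since $\widehat{P_{t+s}^{-\Delta}}(x)=e^{-(t+s)|x|}$ we get
$$
\widehat{\partial_t^\sigma P_t^{-\Delta}}(x)=\frac{e^{-i\pi(m-\sigma)}}{\Gamma(m-\sigma)}\int_0^\infty \frac{\partial^m}{\partial t^m}\,e^{-(t+s)|x|}\,s^{m-\sigma-1}\,ds
=\frac{e^{-i\pi(m-\sigma)}(-1)^m|x|^m}{\Gamma(m-\sigma)}\,e^{-t|x|}\int_0^\infty e^{-s|x|}s^{m-\sigma-1}\,ds.
$$
The remaining integral is a Gamma integral: for $x\neq 0$ the substitution $u=s|x|$ gives $\int_0^\infty e^{-s|x|}s^{m-\sigma-1}\,ds=\Gamma(m-\sigma)|x|^{-(m-\sigma)}$. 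Hence the right-hand side collapses to $e^{-i\pi(m-\sigma)}(-1)^m|x|^\sigma e^{-t|x|}$, and since $(-1)^m=e^{i\pi m}$ we have $e^{-i\pi(m-\sigma)}(-1)^m=e^{i\pi\sigma}$, yielding $\widehat{\partial_t^\sigma P_t^{-\Delta}}(x)=e^{i\pi\sigma}|x|^\sigma e^{-t|x|}$ as claimed. (One checks separately that both sides vanish, or are handled trivially, at $x=0$ when $\sigma>0$.) This computation is of course exactly consistent with the formal identity $\partial_t^\sigma e^{-t|x|}=e^{i\pi\sigma}|x|^\sigma e^{-t|x|}$ recalled just before the statement — the content of the lemma is precisely that the abstract fractional derivative of $P_t^{-\Delta}$ is represented by the honest function whose Fourier transform is this Fourier multiplier.

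The main obstacle is the justification of the interchanges of the Fourier transform with the improper $s$-integral and with $\partial_t^m$, because $P_t^{-\Delta}\in L^1(\mathbb{R}^n)$ but $\partial_t^m P_{t+s}^{-\Delta}$ is only in $L^1$ with a norm that behaves like $(t+s)^{-m}$, which is integrable against $s^{m-\sigma-1}$ at $s=\infty$ but requires care at $s=0$ where one must instead use that for $t>0$ fixed the $L^1$-norm stays bounded as $s\to 0^+$, so the singularity $s^{m-\sigma-1}$ (integrable since $m-\sigma>0$) controls the integral. Alternatively, and perhaps more cleanly, one can avoid the pointwise Fourier computation altogether by noting that $\partial_t^\sigma P_t^{-\Delta}\in L^1(\mathbb{R}^n)$ for each fixed $t>0$ — which follows from Lemma \ref{LemaA}, since each $\varphi^k(z/t)t^{-n}$ is integrable in $z$ — and then testing against Schwartz functions: for $\phi\in\mathcal{S}(\mathbb{R}^n)$, Fubini (now fully justified by the $L^1$ bounds) gives $\int_{\mathbb{R}^n}\widehat{\partial_t^\sigma P_t^{-\Delta}}\,\phi = \frac{e^{-i\pi(m-\sigma)}}{\Gamma(m-\sigma)}\int_0^\infty s^{m-\sigma-1}\int_{\mathbb{R}^n}\widehat{\partial_t^m P_{t+s}^{-\Delta}}\,\phi\,ds$, and then $\widehat{\partial_t^m P_{t+s}^{-\Delta}}(x)=\partial_t^m e^{-(t+s)|x|}=(-1)^m|x|^m e^{-(t+s)|x|}$ plus the Gamma integral finishes it. Either route reduces the lemma to the elementary Gamma-function identity above.
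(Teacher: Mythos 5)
Your proposal is correct and follows essentially the same route as the paper: justify interchanging the Fourier transform with the $s$-integral in the Segovia--Wheeden definition via an integrable dominant (the paper's bound (\ref{N1})), interchange $\partial_t^m$ with the Fourier integral, and then reduce to $\partial_t^m e^{-(t+s)|x|}=(-1)^m|x|^m e^{-(t+s)|x|}$ and the Gamma integral $\int_0^\infty e^{-s|x|}s^{m-\sigma-1}\,ds=\Gamma(m-\sigma)|x|^{-(m-\sigma)}$. The paper leaves the final Gamma computation implicit ("we finish the proof"), whereas you write it out, together with the sign bookkeeping $e^{-i\pi(m-\sigma)}(-1)^m=e^{i\pi\sigma}$; your alternative distributional route is also fine but unnecessary.
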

\begin{proof}
Let $m$ be the smallest integer which strictly exceeds $\sigma$. By (\ref{Dm}) and (\ref{Dmbis}) we have that
\begin{eqnarray}\label{N1}
\left|\frac{\partial ^m}{\partial t^m}\left[\frac{t+s}{((t+s)^2+|z|^2)^{\frac{n+1}{2}}}\right]\right|&\leq& C\sum_{k=0}^{\frac{m+1}{2}}\frac{(t+s)^{m+1-2k}}{((t+s)^2+|z|^2)^{\frac{n+2(m-k)+1}{2}}}\nonumber \\
&\leq& \frac{C}{(t+|z|)^{n+\sigma /2}(t+s)^{m-\sigma /2}},\quad z\in \mathbb{R}^n\mbox{ and }t,s>0.
\end{eqnarray}
Hence
$$
\int_{\mathbb{R}^n}\int_0^\infty \left|\frac{\partial ^m}{\partial t^m}\left[\frac{t+s}{((t+s)^2+|z|^2)^{\frac{n+1}{2}}}\right]\right|s^{m-\sigma -1}dsdz<\infty ,\quad t>0,
$$
and we can change the order of integration to get
\begin{eqnarray*}
\int_{\mathbb{R}^n}e^{-ix\cdot z}\int_0^\infty \frac{\partial ^m}{\partial t^m}\left[\frac{t+s}{((t+s)^2+|z|^2)^{\frac{n+1}{2}}}\right]s^{m-\sigma -1}dsdz&&\\
&\hspace{-16cm}=&\hspace{-8cm}\int_0^\infty s^{m-\sigma -1}\int_{\mathbb{R}^n}e^{-ix\cdot z}\frac{\partial ^m}{\partial t^m}\left[\frac{t+s}{((t+s)^2+|z|^2)^{\frac{n+1}{2}}}\right]dzds,\quad x\in \mathbb{R}^n\mbox{ and }t>0.
\end{eqnarray*}
Then, by interchanging the derivatives and the integral we finish the proof.
\end{proof}

Let $B$ be a Banach space and $\sigma >0$. We consider the operator $\mathcal{G}_{-\Delta ,\sigma , B}$ defined by
$$
\mathcal{G}_{-\Delta , \sigma , B}(f)=t^\sigma \partial_t^\sigma P_t^{-\Delta }(f),\quad f\in C_c^\infty (\mathbb{R}^n)\otimes B.
$$
\begin{propo}\label{PropoC}
Let $B$ be a UMD Banach space and $\sigma >0$. Then, $\mathcal{G}_{-\Delta ,\sigma , B}$ can be extended to $L^p(\mathbb{R}^n, B)$ as a bounded operator from $L^p(\mathbb{R}^n,B)$ into $L^p(\mathbb{R}^n,\gamma (H,B))$, for every $1<p<\infty$.
\end{propo}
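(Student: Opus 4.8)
The plan is to derive the result from an $L^2$-estimate furnished by the $H^\infty$-functional calculus and then to run a vector-valued Calder\'on--Zygmund argument to reach all $1<p<\infty$. By Lemma \ref{LemaB}, for $f\in C_c^\infty(\mathbb{R}^n)\otimes B$ the operator admits the spectral description
$$
\mathcal{G}_{-\Delta ,\sigma ,B}(f)(\cdot ,t)=\psi (t\sqrt{-\Delta})f,\qquad \psi (z)=e^{i\pi \sigma }z^\sigma e^{-z},
$$
where $\psi$ is holomorphic and non-degenerate on each sector $\{|\arg z|<\theta\}$, $\theta<\pi /2$, with $\psi (z)=O(|z|^\sigma)$ as $z\to 0$ and $\psi$ rapidly decreasing as $|z|\to\infty$. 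Since $B$ is UMD, so is $L^2(\mathbb{R}^n,B)$, and on the latter $-\Delta$ has bounded imaginary powers: $(-\Delta )^{is}$ is the Fourier multiplier with symbol $|x|^{2is}$, which is of Mikhlin--H\"ormander type, so by the scalar Mikhlin multiplier theorem, valid on $L^2(\mathbb{R}^n,B)$ because $B$ is UMD, it is bounded there with norm of polynomial growth in $|s|$. Hence $-\Delta$, and with it $\sqrt{-\Delta}$, has a bounded $H^\infty$ functional calculus on $L^2(\mathbb{R}^n,B)$, and \cite[Theorem 4.2(a), with $p=2$]{KW} (combined with the $\gamma$-Fubini identification $\gamma (H,L^2(\mathbb{R}^n,B))=L^2(\mathbb{R}^n,\gamma (H,B))$) yields
$$
\|\mathcal{G}_{-\Delta ,\sigma ,B}(f)\|_{L^2(\mathbb{R}^n,\gamma (H,B))}\leq C\|f\|_{L^2(\mathbb{R}^n,B)},\qquad f\in C_c^\infty(\mathbb{R}^n)\otimes B.
$$

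Next I would view $\mathcal{G}_{-\Delta ,\sigma ,B}$ as a singular integral operator whose kernel $\mathbb{K}(x,y)\in\mathcal{L}(B,\gamma (H,B))$ is scalar multiplication by $k_{x-y}\in H$, where $k_z(t)=t^\sigma \partial_t^\sigma P_t^{-\Delta}(z)$; that is, $\mathbb{K}(x,y)b=[\,t\mapsto k_{x-y}(t)\,b\,]$. Since $\gamma (H,\mathbb{C})=H$, one has $\|\mathbb{K}(x,y)\|_{\mathcal{L}(B,\gamma (H,B))}=\|k_{x-y}\|_H$ and $\|\partial_x^\beta \mathbb{K}(x,y)\|_{\mathcal{L}(B,\gamma (H,B))}=\|\partial_x^\beta k_{x-y}\|_H$, so the Calder\'on--Zygmund estimates
$$
\|\mathbb{K}(x,y)\|_{\mathcal{L}(B,\gamma (H,B))}\leq \frac{C}{|x-y|^n},\qquad \|\nabla_x\mathbb{K}(x,y)\|_{\mathcal{L}(B,\gamma (H,B))}+\|\nabla_y\mathbb{K}(x,y)\|_{\mathcal{L}(B,\gamma (H,B))}\leq \frac{C}{|x-y|^{n+1}}
$$
reduce to the scalar inequalities $\|k_z\|_H\leq C|z|^{-n}$ and $\|\nabla k_z\|_H\leq C|z|^{-n-1}$. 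These I would obtain from Lemma \ref{LemaA}: inserting $t^\sigma \partial_t^\sigma P_t^{-\Delta}(z)=\sum_k c_k t^{-n}\varphi ^k(z/t)$ and performing the radial change of variable $r=|z|/t$ in $\int_0^\infty|k_z(t)|^2\,dt/t$, the bound for $\|k_z\|_H$ reduces to the convergence of $\int_0^\infty r^{2n}|\varphi ^k(r)|^2\,dr/r$, which follows from the boundedness of $\varphi ^k$ near the origin and its decay $\varphi ^k(w)=O(|w|^{-n-\sigma})$ as $|w|\to\infty$, both read off directly from the defining integral of $\varphi ^k$; differentiating under the integral sign improves the decay by one power and gives the gradient estimate.

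With the $L^2(\mathbb{R}^n,B)\to L^2(\mathbb{R}^n,\gamma (H,B))$ bound and the above kernel estimates, the vector-valued Calder\'on--Zygmund theorem for operators acting between the Banach spaces $B$ and $\gamma (H,B)$ gives boundedness of $\mathcal{G}_{-\Delta ,\sigma ,B}$ from $L^p(\mathbb{R}^n,B)$ into $L^p(\mathbb{R}^n,\gamma (H,B))$ for $1<p\leq 2$ (and of weak type $(1,1)$); since the transposed kernel $\mathbb{K}(y,x)$ satisfies the same H\"ormander condition, a duality argument covers $2\leq p<\infty$ as well. Density of $C_c^\infty(\mathbb{R}^n)\otimes B$ in $L^p(\mathbb{R}^n,B)$ then finishes the proof.

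The main obstacle is the $L^2$-estimate. Since $B$ need not be Hilbert, Plancherel is unavailable and the $\gamma$-radonifying norm cannot be evaluated frequency by frequency, so one must pass through the $H^\infty$-calculus of the Laplacian and Kaiser and Weis's square-function theorem; the delicate point is to verify that $\psi (z)=e^{i\pi \sigma }z^\sigma e^{-z}$ is an admissible auxiliary function for that theorem --- non-degeneracy on a sector together with the correct decay both at $0$ and at $\infty$ --- which is exactly where the fractional-derivative identity of Lemma \ref{LemaB} is used. By contrast, once the $L^2$ bound is in hand the passage to general $p$ is routine: the kernel is mere scalar multiplication, so the Calder\'on--Zygmund estimates collapse to the classical scalar ones for the Segovia--Wheeden $g$-function.
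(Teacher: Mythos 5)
Your argument is essentially correct, but it takes a different route from the paper's. The paper proves Proposition \ref{PropoC} in one stroke: it writes $t^\sigma\partial_t^\sigma P_t^{-\Delta}(f)=\varphi_t*f$ with $\varphi=\sum_k c_k\varphi^k$ from Lemma \ref{LemaA}, checks that $\varphi\in L^2(\mathbb{R}^n)$ and that $\sup_{|x|=1}\int_0^\infty t^{2|\eta|}|(\partial^\eta\widehat{\varphi})(tx)|^2\,\frac{dt}{t}<\infty$ using Lemma \ref{LemaB}, and then invokes the full strength of \cite[Theorem 4.2]{KW}, which already yields the $L^p(\mathbb{R}^n,B)\to L^p(\mathbb{R}^n,\gamma(H,B))$ bound for \emph{every} $1<p<\infty$ at once; no Calder\'on--Zygmund step is needed. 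You instead use only the $p=2$ case and then upgrade by vector-valued Calder\'on--Zygmund theory with the kernel $k_{x-y}(t)=t^\sigma\partial_t^\sigma P_t^{-\Delta}(x-y)$; your scaling computation $\|k_z\|_H=|z|^{-n}\|\,r\mapsto r^n\varphi(r\omega)\|_{L^2(dr/r)}$ and the improved decay of $\nabla\varphi^k$ are correct, and the resulting estimates $\|k_z\|_H\leq C|z|^{-n}$, $\|\nabla k_z\|_H\leq C|z|^{-n-1}$ do give the standard $(B,\gamma(H,B))$-valued kernel conditions. This is in fact exactly the strategy the paper reserves for the Hermite operator in Subsection 2.2 (estimates (\ref{B1})--(\ref{B2}) and the representation (\ref{B3})); applied to $-\Delta$ it is more laborious but more self-contained, relying on \cite{KW} only at $p=2$. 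For completeness you would still need to verify, as the paper does for $\mathcal{L}$, that the operator coincides with integration against the kernel off the support of $f$, in the $\gamma(H,B)$-Bochner sense.

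One step of your $L^2$ argument is stated incorrectly, though it is easily repaired. The implication ``bounded imaginary powers with polynomially growing norms $\Rightarrow$ bounded $H^\infty$-calculus'' is false for general sectorial operators on Banach spaces. For $-\Delta$ on $L^2(\mathbb{R}^n,B)$ with $B$ UMD the bounded $H^\infty(\Sigma_\theta)$-calculus does hold, but the correct justification is to apply the vector-valued Mikhlin theorem to the symbols $\xi\mapsto f(|\xi|^2)$ for arbitrary $f\in H^\infty(\Sigma_\theta)$ (the Cauchy estimates give the Mikhlin bounds uniformly in $\|f\|_{H^\infty}$), not merely to the imaginary powers. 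Alternatively, and more simply, you can drop the functional-calculus detour entirely and obtain the $L^2$ estimate directly from \cite[Theorem 4.2]{KW} with $p=2$ after verifying the Fourier-side hypotheses via Lemmas \ref{LemaA} and \ref{LemaB}, which is what the paper's verification already provides.
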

\begin{proof}
Let $m$ be the smallest integer which strictly exceeds $\sigma$. Assume that $f\in C_c^\infty (\mathbb{R}^n)\otimes B$, and $1<p<\infty$. We have that
$$
\partial _t^\sigma P_t^{-\Delta }(f)(x)=\int_{\mathbb{R}^n}\partial _t^\sigma P_t^{-\Delta }(x-y)f(y)dy,\quad x\in \mathbb{R}^n,\mbox{ and }t>0.
$$
To justify this equality it is sufficient to take into account (\ref{N1}).

According to Lemma \ref{LemaA} we can write
$$
t^\sigma \partial_t^\sigma P_t^{-\Delta }(f)(x)=(\varphi _t*f)(x),\quad x\in \mathbb{R}^n\mbox{ and }t>0,
$$
being $\varphi =\sum_{k=0}^{\frac{m+1}{2}}c_k\varphi ^k$, where $c_k$ and $\varphi ^k$ are defined in Lemma \ref{LemaA}, for every $k\in \mathbb{N}$, $0\leq k\leq \frac{m+1}{2}$.  Here $\varphi _t(z)=t^{-n}\varphi (z/t)$, $z\in \mathbb{R}^n$ and $t>0$. Since
$$
|\varphi ^k(z)|\leq \frac{C}{(1+|z|)^{n+\sigma }}\int_0^\infty \frac{v^{m-\sigma -1}}{(1+v)^{n+m}}dv,\quad z\in \mathbb{R}^n\mbox{ and  }0\leq k\leq \frac{m+1}{2},
$$
we have that $\varphi \in L^2(\mathbb{R}^n)$. Moreover, by using Lemma \ref{LemaB}, if $\eta\in \mathbb{N}^n$ we get
$$
\sup_{|x|=1}\int_0^\infty t^{2|\eta|}\left|\Big(\frac{d^\eta}{du^\eta}\widehat{\varphi}\Big)(tx)\right|^2\frac{dt}{t}<\infty.
$$
Then, according to \cite[Theorem 4.2]{KW} we conclude that $\mathcal{G}_{-\Delta ,\sigma ,B}$ can be extended from $C_c^\infty (\mathbb{R}^n)\otimes B$ to $L^p(\mathbb{R}^n,B)$ as a bounded operator from $L^p(\mathbb{R}^n, B)$ into $L^p(\mathbb{R}^n,\gamma (H,B))$, for every $1<p<\infty $.
\end{proof}
\begin{lema}\label{LemaD}
Let $\sigma >0$. Then,
$$
|\partial _t^\sigma [te^{-\frac{t^2}{4u}}]|\leq Ce^{-\frac{t^2}{8u}}u^{\frac{1-\sigma }{2}},\quad t,u\in(0,\infty ).
$$
\end{lema}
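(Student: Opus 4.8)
The plan is to reduce everything to the one–dimensional profile $x\mapsto\partial_x^\sigma[xe^{-x^2/4}]$ by a scaling argument, and then bound that profile by a Gaussian.

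Recall that $m$ is the smallest integer strictly exceeding $\sigma$, so $m-\sigma\in(0,1]$ and in particular $m-\sigma-1>-1$. Writing out the Segovia--Wheeden fractional derivative (applied to the function $t\mapsto te^{-t^2/(4u)}$, with $u$ a parameter) and setting $h_u(w)=we^{-w^2/(4u)}$, one has
$$
\partial_t^\sigma[te^{-t^2/(4u)}]=\frac{e^{-i\pi(m-\sigma)}}{\Gamma(m-\sigma)}\int_0^\infty h_u^{(m)}(t+s)\,s^{m-\sigma-1}\,ds,\qquad t,u>0.
$$
The change of variables $w=\sqrt u\,x$ gives $h_u(\sqrt u\,x)=\sqrt u\,h_1(x)$, hence, by the chain rule, $h_u^{(m)}(w)=u^{(1-m)/2}h_1^{(m)}(w/\sqrt u)$. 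A one–line induction shows $h_1^{(m)}(x)=q_m(x)e^{-x^2/4}$ with $q_m$ a polynomial (of degree $m+1$); since a polynomial grows slower than $e^{x^2/8}$, we get $|h_1^{(m)}(x)|\le Ce^{-x^2/8}$ for all $x\in\mathbb{R}$, and therefore
$$
|h_u^{(m)}(t+s)|\le C\,u^{(1-m)/2}e^{-(t+s)^2/(8u)},\qquad t,s,u>0,
$$
which also shows the integral above converges absolutely (the only delicate point, integrability near $s=0$, is handled by $m-\sigma-1>-1$).

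Next I would separate the variables using $(t+s)^2=t^2+2ts+s^2\ge t^2+s^2$ for $t,s\ge0$, so that $e^{-(t+s)^2/(8u)}\le e^{-t^2/(8u)}e^{-s^2/(8u)}$, giving
$$
\bigl|\partial_t^\sigma[te^{-t^2/(4u)}]\bigr|\le\frac{C\,u^{(1-m)/2}}{\Gamma(m-\sigma)}\,e^{-t^2/(8u)}\int_0^\infty e^{-s^2/(8u)}s^{m-\sigma-1}\,ds.
$$
Finally the substitution $s=\sqrt u\,r$ evaluates the remaining integral as $u^{(m-\sigma)/2}\int_0^\infty e^{-r^2/8}r^{m-\sigma-1}\,dr=C\,u^{(m-\sigma)/2}$. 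Collecting the powers of $u$, namely $u^{(1-m)/2}\cdot u^{(m-\sigma)/2}=u^{(1-\sigma)/2}$, yields exactly the asserted estimate.

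There is no real obstacle here; the argument is elementary. The only points needing a bit of care are the bookkeeping of the powers of $u$, which is taken care of once and for all by the dilation $w=\sqrt u\,x$, and the convergence of the $s$-integral near $0$, which is precisely why one must take $m$ to be the smallest integer \emph{strictly} exceeding $\sigma$.
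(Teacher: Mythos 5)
Your argument is correct and is essentially the paper's own proof: both reduce the estimate to the pointwise bound $|\frac{\partial^m}{\partial t^m}[(t+s)e^{-(t+s)^2/(4u)}]|\le C u^{(1-m)/2}e^{-(t+s)^2/(8u)}$ and then integrate against $s^{m-\sigma-1}$, splitting the Gaussian via $(t+s)^2\ge t^2+s^2$. The only (cosmetic) difference is that you obtain the pointwise bound by the dilation $w=\sqrt{u}\,x$ plus the observation that $h_1^{(m)}$ is a polynomial times a Gaussian, whereas the paper invokes an explicit Hermite-polynomial formula for $\frac{\partial^{m+1}}{\partial v^{m+1}}e^{-v^2}$.
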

\begin{proof}
 Let $m$ be the smallest integer which strictly exceeds $\sigma$. By using \cite[(4.6)]{GLLNU}, we get
\begin{eqnarray*}
\frac{\partial^m}{\partial t^m}[(t+s)e^{-\frac{(t+s)^2}{4u}}]&=&-2u\frac{\partial^{m+1}}{\partial t^{m+1}}\left[e^{-\frac{(t+s)^2}{4u}}\right]=-\frac{1}{{2^m}u^{\frac{m-1}{2}}}\frac{\partial ^{m+1}}{\partial v^{m+1}}[e^{-v^2}]_{|v=\frac{t+s}{2\sqrt{u}}}\\
&=&\frac{1}{2^mu^{\frac{m-1}{2}}}\sum_{k=0}^{\frac{m+1}{2}}(-1)^{m-k}E_{m+1,k}\left(\frac{t+s}{2\sqrt{u}}\right)^{m+1-2k}e^{-\frac{(t+s)^2}{4u}},\quad t,s,u\in (0,\infty ),
\end{eqnarray*}
where $E_{m+1, k}=\frac{2^{m+1-2k}(m+1)!}{k!(m+1-2k)!}$, $0\leq k\leq \frac{m+1}{2}$.

Hence,
$$
\left|\frac{\partial^m}{\partial t^m}[(t+s)e^{-\frac{(t+s)^2}{4u}}]\right|\leq C\frac{e^{-\frac{(t+s)^2}{8u}}}{u^{\frac{m-1}{2}}}.
$$

Then,
\begin{eqnarray*}
|\partial_t^\sigma [te^{-\frac{t^2}{4u}}]|&\leq&Cu^{\frac{1-m}{2}}\int_0^\infty e^{-\frac{(t+s)^2}{8u}}s^{m-\sigma -1}ds\\
&\leq&Ce^{-\frac{t^2}{8u}}u^{\frac{1-m}{2}}\int_0^\infty e^{-\frac{s^2}{8u}}s^{m-\sigma -1}ds\\
&\leq&Ce^{-\frac{t^2}{8u}}u^{\frac{1-\sigma }{2}},\quad t,u\in (0,\infty ).
\end{eqnarray*}
\end{proof}

We will actually use the following result only when $p=2$, but we do not need to make additional  efforts to prove it for every $1<p<\infty$.

\begin{propo}\label{acotZ}
Let $B$ be a UMD space, $1<p<\infty$, $\sigma >0$ and $\beta \geq 0$. Then, the operator $\mathcal{G}_{\mathcal{O}+\beta,\sigma ,B}$ is bounded from $L^p(\mathbb R^n,d\lambda,B)$ into $L^p(\mathbb R^n,d\lambda,\gamma(H,B))$.
\end{propo}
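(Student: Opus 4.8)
The plan is to reduce everything to the classical Poisson $g$-function treated in Proposition \ref{PropoC}, by combining the subordination formula for $P_t^{\mathcal O+\beta}$ with the size estimate in Lemma \ref{LemaD}. By density it is enough to prove the inequality for $f\in C_c^\infty(\mathbb R^n)\otimes B$; for such $f$ every object below is well defined, $t\mapsto\mathcal G_{\mathcal O+\beta,\sigma ,B}(f)(x,t)$ is strongly measurable and, being a finite sum of rank-one tensors, represents an element of $\gamma(H,B)$ for a.e.\ $x$. Since the factors $u^{-3/2}e^{-\beta u}W_u^{\mathcal O}(f)(x)$ do not depend on $t$, Lemma \ref{LemaD} justifies differentiating under the integral sign in the subordination formula, giving
$$
\mathcal G_{\mathcal O+\beta,\sigma ,B}(f)(x,t)=\frac{t^\sigma}{\sqrt{4\pi}}\int_0^\infty u^{-\frac32}e^{-\beta u}\,\partial_t^\sigma\big[te^{-\frac{t^2}{4u}}\big]\,W_u^{\mathcal O}(f)(x)\,du,\qquad x\in\mathbb R^n,\ t>0,
$$
the bound $|\partial_t^\sigma[te^{-t^2/(4u)}]|\leq Ce^{-t^2/(8u)}u^{(1-\sigma)/2}$ ensuring absolute convergence of the $B$-valued Bochner integral and, after the change of variable $t=2\sqrt u\,w$, the estimate $\big\|\,t\mapsto t^\sigma\partial_t^\sigma[te^{-t^2/(4u)}]\,\big\|_H\leq C\sqrt u$.

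I would then split the $u$-integral as $\int_0^\infty=\int_0^1+\int_1^\infty$. For the global part ($u>1$), using that the $\gamma(H,B)$-norm is a genuine norm on sums of rank-one tensors, the last estimate, and the pointwise domination $\|W_u^{\mathcal O}(f)(x)\|_B\leq W_u^{\mathcal O}(\|f(\cdot)\|_B)(x)$, I get a bound by $C\int_1^\infty u^{-1}e^{-\beta u}W_u^{\mathcal O}(\|f\|_B)(x)\,du$; taking $L^p(\mathbb R^n,d\lambda)$-norms and using the contractivity of $W_u^{\mathcal O}$ on $L^p(\mathbb R^n,d\lambda)$ this is $\leq C\|f\|_{L^p(\mathbb R^n,d\lambda,B)}$ as soon as $\beta>0$. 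For the endpoint $\beta=0$ the integral $\int_1^\infty u^{-1}\,du$ diverges, so I would first subtract the $\tilde H_0$-component of $f$ (which is annihilated by $\partial_t^\sigma$) and replace $W_u^{\mathcal O}$ by $W_u^{\mathcal O}(I-\mathbb E_\lambda)$, whose operator norm on $L^p(\mathbb R^n,d\lambda,B)$ decays exponentially by the spectral gap of $\mathcal O$ (interpolating the $L^2$ decay with the uniform $L^\infty$ bound), which restores integrability in $u$.

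The local part, $\int_0^1$, is where the comparison with Proposition \ref{PropoC} takes place, and it is the main obstacle: since $P_t^{\mathcal O+\beta}$ is not a convolution operator, the multiplier theorem \cite[Theorem 4.2]{KW} used for the classical case is not available, so the transference has to be done by hand and, crucially, at the level of the full $\gamma(H,B)$-valued norm, where scalar pointwise domination is of no use. Concretely, for $0<u\leq1$ I would split $W_u^{\mathcal O}(x,y)$ into its contribution over a local region $\{|x-y|\leq c(1+|x|)^{-1}\}$, on which $W_u^{\mathcal O}(x,y)$ is comparable to the Gauss--Weierstrass kernel $W_{cu}^{-\Delta}(x-y)$, and the remaining contribution, which carries extra Gaussian decay and can be handled by a direct Schur-type estimate. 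Using Lemma \ref{LemaA}, Minkowski's inequality for the $\gamma$-norm, and the ideal property of $\gamma$-radonifying operators (so that the bounded operator on the $H$-variable coming from the change of variable $s=tv$ in the fractional derivative does not increase the $\gamma(H,B)$-norm), the local-region piece is dominated in $\gamma(H,B)$, for a.e.\ $x$, by a localized version of $\mathcal G_{-\Delta ,\sigma ,B}(f)$. Proposition \ref{PropoC}, together with the $L^p(\mathbb R^n,d\lambda)$-boundedness of the auxiliary positive integral operators (which follows from the boundedness of the local Hardy--Littlewood maximal operator with respect to $d\lambda$), then finishes the proof.
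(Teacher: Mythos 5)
Your overall strategy (reduce to the classical operator of Proposition \ref{PropoC} via subordination and Lemma \ref{LemaD}) is the paper's strategy, but the two steps that carry all the difficulty are not actually achievable as you describe them. The decisive one is the local transference for $0<u\leq 1$: two-sided pointwise comparability $W_u^{\mathcal O}(x,y)\simeq W_{cu}^{-\Delta}(x-y)$ is exactly the kind of scalar domination that, as you yourself note, gives no control of a $\gamma(H,B)$-norm, since the map from a kernel to the corresponding element of $\gamma(H,B)$ is not monotone; and the ideal property of $\gamma(H,B)$ only allows you to pull out one fixed bounded operator acting on the $H$-variable, whereas your comparison constants and the time change $u\mapsto cu$ vary with $x$, $y$ and $u$ and cannot be factored through a single operator on $H$. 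What is needed is an exact subtraction: write the differences $\mathcal G^{\rm loc}_{\mathcal O+\beta,\sigma,B}-\mathcal G^{\rm loc}_{-\Delta+\beta,\sigma,B}$ and $\mathcal G^{\rm loc}_{-\Delta+\beta,\sigma,B}-\mathcal G^{\rm loc}_{-\Delta,\sigma,B}$ as integral operators and estimate the $H$-norms of their kernels $H_{\beta,\sigma}(x,y,\cdot)$ and $\mathcal H_{\beta,\sigma}(x,y,\cdot)$ (for $\beta=0$ this uses the cancellation $\int_0^\infty\partial_t^\sigma[te^{-t^2/(4u)}]u^{-3/2}du=0$ together with \cite[Lemma 3.4]{HTV1}); only for such positive-kernel errors do Minkowski's inequality in $\gamma(H,B)$ and a Schur test apply, after which the classical local piece is handled by Proposition \ref{PropoC} combined with the $dx\leftrightarrow d\lambda$ transference results for local operators in \cite{HTV1}. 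Your "domination in $\gamma(H,B)$ by a localized $\mathcal G_{-\Delta,\sigma,B}(f)$" is therefore asserted, not proved, and the proposed mechanism would not prove it.

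The two remaining pieces also have gaps. Off the local region with $0<u\leq 1$ there is no "extra Gaussian decay in $|x-y|$": for $y\approx e^{-2u}x$, which lies in the off-local region as soon as $u\gtrsim (1+|x|)^{-2}$, one has $W_u^{\mathcal O}(x,y)\sim u^{-n/2}$, so the kernel $\int_0^1 u^{-1}W_u^{\mathcal O}(x,y)\,du$ concentrates along the Mehler segment and its $L^p(d\lambda)$-boundedness is precisely the hard global part of Ornstein--Uhlenbeck theory; the paper treats it by integrating by parts in $u$ and invoking the kernel $K(x,y)$ and \cite[Theorem 2.3]{MPS} (see also \cite{Son}), not by a direct Schur estimate. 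Finally, for $\beta=0$ and $u>1$, the claimed exponential decay of $W_u^{\mathcal O}(I-\mathbb E_\lambda)$ on $L^p(\mathbb R^n,d\lambda,B)$ "by the spectral gap and interpolation" is not justified for a general UMD space $B$: there is no orthogonality in $L^2(d\lambda,B)$, and once you dominate by the absolute kernel $|W_u^{\mathcal O}(x,y)-\pi^{-n/2}e^{-|y|^2}|$ the cancellation on which the scalar interpolation argument rests is lost, so genuine kernel estimates would again be required. The paper avoids this issue altogether by using the identity above to subtract the limiting kernel $\pi^{-n/2}e^{-|y|^2}$ inside the local difference and by the integration by parts in the global term.
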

\begin{proof}
Our procedure is inspired by the  ideas developed in \cite{HTV1}. We consider the sets
$$
\mathcal{N}=\left\{(x,y) \in \mathbb R^n \times \mathbb R^n:|x-y|<\frac{n(n+3)}{1+|x|+|y|}\right\}
$$
and
$$
\tilde{\mathcal{N}}=\left\{(x,y) \in \mathbb R^n \times \mathbb R^n:|x-y|<\frac{2n(n+3)}{1+|x|+|y|}\right\}.
$$

We choose a function $\varphi \in C^\infty(\mathbb R^n\times \mathbb R^n)$ such that
$$
\varphi(x,y)=\left\{\begin{array}{ll}
                    1, & (x,y) \in \mathcal{N},\\
                    0, & (x,y) \not\in \tilde{\mathcal{N}},
                    \end{array}\right.
$$
and that
$$
|\nabla_x\varphi(x,y)| + |\nabla_y\varphi(x,y)| \leq \frac{C}{|x-y|},\;\; x,y \in \mathbb R^n,\;\; x \not= y.
$$
The operator $\mathcal{G}_{\mathcal{O}+\beta,\sigma ,B}$ is splitted as follows
$$
\mathcal{G}_{\mathcal{O}+\beta,\sigma ,B}=\mathcal{G}_{\mathcal{O}+\beta,\sigma ,B}^{\rm loc} +\mathcal{G}_{\mathcal{O}+\beta,\sigma ,B}^{\rm glob},
$$
where, for every $f \in L^p(\mathbb R^n,d\lambda ,B)$,
$$
\mathcal{G}_{\mathcal{O}+\beta,\sigma ,B}^{\rm loc}(f)(x,t)=\mathcal{G}_{\mathcal{O}+\beta,\sigma ,B}(\varphi(x,\cdot)f)(x,t),\;\; x \in \mathbb R^n\;\;\mbox{and}\;\;t>0.
$$

We define, for every $t>0$, the operator $P_t^{-\Delta +\beta}$ by
$$
P^{-\Delta + \beta}_t(g) = \frac{t}{\sqrt{4\pi}}\int_0^\infty u^{-\frac{3}{2}} e^{-\frac{t^2}{4u} -\beta u}W_u^{-\Delta}(g) du, \;\; g\in L^p(\mathbb R^n,B).
$$
where, for every $t>0$,
$$
W_t^{-\Delta}(g)(x)=\int_{\mathbb R^n}W_t^{-\Delta }(x-y)g(y)dy,\;\; g \in L^p(\mathbb R^n,B)\;\; \mbox{and}\;\;x\in \mathbb R^n,
$$
and
$$
W_t^{-\Delta}(z)=\frac{1}{(4\pi t)^{\frac{n}{2}}}e^{-\frac{|z|^2}{4t}},\quad z\in \mathbb{R}^n.
$$

Note that $\{W_t^{-\Delta}\}_{t>0}$ represents the usual heat semigroup. We consider the operators
$$
\mathcal{G}_{-\Delta+\beta,\sigma ,B}(g)(x,t)=t^\sigma \partial_t^\sigma P_t^{-\Delta+\beta}(g)(x),\quad x\in \mathbb R^n \;\;\mbox{and}\;\;t>0,
$$
and
$$
\mathcal{G}_{-\Delta+\beta,\sigma ,B}^{\rm loc}(g)(x,t)=\mathcal{G}_{-\Delta+\beta ,\sigma ,B}(\varphi(x,\cdot)g)(x,t),\;\;x\in \mathbb R^n\;\;\mbox{and}\;\;t>0,
$$
for every $g\in L^p(\mathbb R^n,B)$.

We now split the operator $\mathcal{G}_{\mathcal{O}+\beta,\sigma ,B}$ as follows
$$
\mathcal{G}_{\mathcal{O}+\beta,\sigma ,B}=\mathcal{G}_{\mathcal{O}+\beta,\sigma ,B}^{\rm glob}+(\mathcal{G}_{\mathcal{O}+\beta,\sigma ,B}^{\rm loc}-\mathcal{G}_{-\Delta+\beta,\sigma ,B}^{\rm loc})+(\mathcal{G}_{-\Delta+\beta,\sigma, B}^{\rm loc} - \mathcal{G}_{-\Delta,\sigma ,B}^{\rm loc}) + \mathcal{G}_{-\Delta,\sigma ,B}^{\rm loc}.
$$
We study firstly the operator $\mathcal{G}_{\mathcal{O}+\beta,\sigma ,B}^{\rm loc}-\mathcal{G}_{-\Delta+\beta,\sigma ,B}^{\rm loc}$. Let $f \in L^p(\mathbb R^n,d\lambda,B)$. We have that
\begin{multline}\label{N2}
\mathcal{G}_{\mathcal{O}+\beta,\sigma ,B}^{\rm loc}(f)(x,t) - \mathcal{G}_{-\Delta+\beta,\sigma ,B}^{\rm loc}(f)(x,t)\\
= \frac{t^\sigma }{\sqrt{4\pi}}\int_0^\infty \partial_t^\sigma [te^{-\frac{t^2}{4u}}]u^{-\frac{3}{2}}
e^{-\beta u}(W_u^{\mathcal{O}}(\varphi(x,\cdot)f)(x)-W_u^{-\Delta}(\varphi(x,\cdot)f)(x))du,\;\;\mbox{a.e.}\;\;x\in \mathbb R^n\;\;\mbox{and}\;\;t>0.
\end{multline}
Indeed, let $m$ be the smallest integer which strictly exceeds $\sigma$. Then, for every $x\in \mathbb R^n$  and $t>0$,
\begin{multline*}
\mathcal{G}_{\mathcal{O}+\beta,\sigma ,B}^{\rm loc}(f)(x,t) - \mathcal{G}_{-\Delta+\beta,\sigma ,B}^{\rm loc}(f)(x,t)= \frac{t^\sigma }{\sqrt{4\pi}}\frac{e^{-i\pi (m-\sigma )}}
{\Gamma (m-\sigma )}\\
\times \int_0^\infty \frac{\partial^m}{\partial t^m}\left[(t+s)\int_0^\infty e^{-\frac{(t+s)^2}{4u}}u^{-\frac{3}{2}}e^{-\beta u}(W_u^{\mathcal{O}}(\varphi(x,\cdot)f)(x)-W_u^{-\Delta}
(\varphi(x,\cdot)f)(x))du\right]s^{m-\sigma -1}ds.
\end{multline*}

According to Lemma \ref{LemaD}, we get, for each $x\in \mathbb{R}^n\mbox{ and }t>0$,
\begin{eqnarray}\label{N3}
\int_0^\infty \int_0^\infty \left|\frac{\partial^m}{\partial t^m}\left[(t+s)e^{-\frac{(t+s)^2}{4u}}\right]\right|u^{-\frac{3}{2}}e^{-\beta u}\|W_u^{\mathcal{O}}(\varphi(x,\cdot)f)(x)-W_u^{-\Delta}
(\varphi(x,\cdot)f)(x)\|_Bdus^{m-\sigma -1}ds&&\nonumber\\
&\hspace{-28cm}\leq &\hspace{-14cm}C\int_0^\infty \int_0^\infty e^{-\frac{(t+s)^2}{8u}}u^{-1-\frac{m}{2}}e^{-\beta u}
\|W_u^\mathcal{O}(\varphi (x,\cdot)f)(x)-W_u^{-\Delta}(\varphi(x,\cdot)f)(x)\|_Bdus^{m-\sigma -1}ds\nonumber\\
&\hspace{-28cm}\leq &\hspace{-14cm}C\left(\sup_{u>0}W_u^\mathcal{O}(||f||_B)(x)+\sup_{u>0}||W_u^{-\Delta}(\varphi (x,\cdot)f)(x)||_B\right)\int_0^\infty e^{-\beta u-\frac{t^2}{8u}}u^{-1-\frac{m}{2}}\int_0^\infty e^{-\frac{s^2}{8u}}s^{m-\sigma -1}dsdu\nonumber\\
&\hspace{-24cm}\leq &\hspace{-12cm}Ct^{-\frac{\sigma }{2}}\left(\sup_{u>0}W_u^\mathcal{O}(||f||_B)(x)+\sup_{u>0}||W_u^{-\Delta}(\varphi (x,\cdot)f)(x)||_B\right) .
\end{eqnarray}

On the other hand, according to \cite[III.3]{Stein} the maximal operator
$$
W_*^\mathcal{O}(g)=\sup_{u>0}|W_u^\mathcal{O}(g)|
$$
is bounded from $L^p(\mathbb{R}^n,d\lambda )$ into itself. Hence
$$
\sup_{u>0}|W_u^\mathcal{O}(||f||_B)(x)|<\infty ,\quad \mbox{a.e.}\;x\in \mathbb{R}^n.
$$
Moreover, by using \cite[Proposition 2.4]{HTV1} we can prove that the maximal operator
$$
W_{*,B}^{-\Delta ,{\rm loc}}(g)(x)=\sup_{u>0}||W_u^{-\Delta }(\varphi (x,\cdot)g)(x)||_B
$$
is bounded from $L^p(\mathbb{R}^n,d\lambda ,B)$ into $L^p(\mathbb{R}^n,d\lambda )$. Then, $W_{*,B}^{-\Delta ,{\rm loc}}(f)(x)<\infty$, a.e. $x\in \mathbb{R}^n$.

From (\ref{N3}) we deduce that
\begin{multline*}
\int_0^\infty \int_0^\infty \left|\frac{\partial^m}{\partial t^m}\left[(t+s)e^{-\frac{(t+s)^2}{4u}}\right]\right|u^{-\frac{3}{2}}e^{-\beta u}\\
\times \|W_u^{\mathcal{O}}(\varphi(x,\cdot)f)(x)-W_u^{-\Delta}
(\varphi(x,\cdot)f)(x)\|_Bdus^{m-\sigma -1}ds<\infty, \mbox{ a.e. }x\in \mathbb{R}^n\mbox{ and }t>0.
\end{multline*}
Hence (\ref{N2}) holds.

Moreover, we can write
\begin{multline}\label{operatorZK2}
\mathcal{G}^{\rm loc}_{\mathcal{O}+\beta,\sigma ,B}(f)(x,t) - \mathcal{G}^{\rm loc}_{-\Delta+\beta,\sigma ,B}(f)(x,t)\\
= \int_{\mathbb R^n}f(y)\varphi(x,y)H_{\beta ,\sigma }(x,y,t)dy,\;\; \mbox{a.e.}\; x\in \mathbb R^n\;\;\mbox{and}\;\; t>0,
\end{multline}
where
$$
H_{\beta ,\sigma }(x,y,t)= \frac{t^\sigma }{\sqrt{4\pi}}\int_0^\infty \partial _t^\sigma [te^{-\frac{t^2}{4u}}]u^{-\frac{3}{2}}e^{-\beta u}\left(W_u^{\mathcal{O}}(x,y)-W_u^{-\Delta}(x-y)\right)du,\;\;x,y\in \mathbb R^n\;\;\mbox{and}\;\; t>0.
$$

To show (\ref{operatorZK2}) we must justify the interchange of order of integration. Since $W^{\mathcal{O}}_u(x,y) = W_u^{\mathcal{L}}(x,y)e^{\frac{|x|^2-|y|^2}{2}+nu}$, $x,y \in \mathbb R^n$ and $u>0$, and
\begin{equation}\label{acotW}
W_u^{\mathcal{L}}(x,y) \leq C\frac{e^{-nu}e^{-c\frac{|x-y|^2}{1-e^{-4u}}}}{(1-e^{-4u})^{\frac{n}{2}}}\leq C\left\{\begin{array}{ll}
                \displaystyle u^{-\frac{n}{2}}e^{-c\frac{|x-y|^2}{u}},&x,y\in \mathbb{R}^n\mbox{ and }0<u\leq 1,\\
                &\\
                \displaystyle e^{-(nu+c|x-y|^2)},&x,y\in \mathbb{R}^n\mbox{ and }u>1,
                \end{array}
\right.
\end{equation}
by using Lemma \ref{LemaD} we have that
\begin{eqnarray*}
\int_0^\infty |\partial _t^\sigma [te^{-\frac{t^2}{4u}}]|u^{-\frac{3}{2}}e^{-\beta u}|W_u^{\mathcal{O}}(x,y)-W_u^{-\Delta}(x-y)|du&&\\
&\hspace{-14cm}\leq &\hspace{-7cm} C\left(e^{\frac{|x|^2-|y|^2}{2}}+1\right)\left(\int_0^1e^{-c\frac{t^2+|x-y|^2}{u}}u^{-\frac{2+\sigma +n}{2}}du+\int_1^\infty u^{-\frac{2+\sigma }{2}}du\right)\\
&\hspace{-14cm}\leq &\hspace{-7cm} C\left(e^{\frac{|x|^2-|y|^2}{2}}+1\right)(t^{-\sigma -n}+1),\quad x,y\in \mathbb{R}^n\mbox{ and }t>0.
\end{eqnarray*}
Hence, by using H\"older inequality we get
\begin{eqnarray*}
\lefteqn{\int_{\mathbb R^n}\|f(y)\|_B|\varphi(x,y)|\int_0^\infty  |t^\sigma \partial _t^\sigma [te^{-\frac{t^2}{4u}}]|u^{-\frac{3}{2}}e^{-\beta u}|W_u^{\mathcal{O}}(x,y)-W_u^{-\Delta}(x-y)|dudy}\\
&\leq& C(t^\sigma+t^{-n})\int_{|x-y| \leq \frac{2n(n+3)}{1+|x|+|y|}}\|f(y)\|_B \left(e^{\frac{|x|^2-|y|^2}{2}}+1\right)dy<\infty , \;\; x \in \mathbb R^n\;\; \mbox{and}\;\; t>0.
\end{eqnarray*}
Then, the equality (\ref{operatorZK2}) is established.

Next, we show that
\begin{equation}\label{N4}
||H_{\beta ,\sigma }(x,y,\cdot)||_H\leq C\left(\frac{1+|x|^{\frac{1}{2}}}{|x-y|^{n-\frac{1}{2}}}+\log \frac{1}{|x-y|}\right),\quad (x,y)\in \tilde{\mathcal{N}},\;\;x\not=y.
\end{equation}
We recall that $H=L^2((0,\infty ),\frac{dt}{t})$.

Firstly we consider $\beta =0$. We proceed by following some ideas developed in \cite[proof of Lemma 3.1]{HTV1}. Note that, by taking into account Lemma \ref{LemaD}, we have that
\begin{eqnarray}\label{NA}
\lefteqn{\int_0^\infty \partial_t^\sigma [te^{-\frac{t^2}{4u}}]u^{-\frac{3}{2}}du=\partial _t^\sigma \left[t\int_0^\infty e^{-\frac{t^2}{4u}}u^{-\frac{3}{2}}du\right]}\\
&=&\frac{e^{-i\pi (m-\sigma )}}{\Gamma (m-\sigma )}\int_0^\infty \frac{\partial ^m}{\partial t^m}\int_0^\infty
(t+s)e^{-\frac{(t+s)^2}{4u}}u^{-\frac{3}{2}}dus^{m-\sigma -1}ds\nonumber\\
&=&\frac{e^{-i\pi (m-\sigma )}}{\Gamma (m-\sigma )}\int_0^\infty s^{m-\sigma -1}
\frac{\partial ^m}{\partial t^m}\int_0^\infty e^{-\frac{1}{4v}}v^{-\frac{3}{2}}dvds=0,\quad t>0.\nonumber
\end{eqnarray}
Hence, we can write
\begin{eqnarray*}
\lefteqn{H_{0,\sigma }(x,y,t)=\frac{t^\sigma}{\sqrt{4\pi }}\int_0^\infty\partial _t^\sigma [te^{-\frac{t^2}{4u}}] u^{-\frac{3}{2}}
\left(W_u^\mathcal{O}(x,y)-\frac{e^{-|y|^2}}
{\pi ^{\frac{n}{2}}}-W_u^{-\Delta }(x-y)\right)du}\\
&=&\frac{t^\sigma}{\sqrt{4\pi }}\int_0^\infty \partial _t^\sigma [te^{-\frac{t^2}{4u}}]u^{-\frac{3}{2}}\left(W_u^\mathcal{O}(x,y)-\frac{e^{-|y|^2}}{\pi ^{\frac{n}{2}}}\chi _{(1,\infty )}(u)-W_u^{-\Delta }(x-y)\right)du\\
&-&\frac{t^\sigma}{\sqrt{4\pi }}\frac{e^{-|y|^2}}{\pi ^{\frac{n}{2}}}\int_0^1\partial _t^\sigma [te^{-\frac{t^2}{4u}}]u^{-\frac{3}{2}}du
=J_1(x,y,t)+J_2(x,y,t),\quad x,y\in \mathbb{R}^n\mbox{ and }t>0.
\end{eqnarray*}

By using Minkowski's inequality and Lemma \ref{LemaD} it follows that
\begin{eqnarray*}
\|J_1(x,y,\cdot )\|_H&\leq &C\int_0^\infty u^{-\frac{2+\sigma}{2}}|W_u^\mathcal{O}(x,y)-\frac{e^{-|y|^2}}{\pi ^{\frac{n}{2}}}\chi _{(1,\infty )}(u)-W_u^{-\Delta }(x-y)| \left(\int_0^\infty t^{2\sigma -1}e^{-\frac{t^2}{4u}}dt\right)^{\frac{1}{2}}du\\
&\leq &C\int_0^\infty \frac{1}{u}\left|W_u^\mathcal{O}(x,y)-\frac{e^{-|y|^2}}{\pi ^{\frac{n}{2}}}\chi _{(1,\infty )}(u)-W_u^{-\Delta }(x-y)\right|du,\;\,x,y\in \mathbb{R}^n,\;x\not=y.
\end{eqnarray*}
Then, by \cite[Lemma 3.4]{HTV1}, we get
$$
\|J_1(x,y,\cdot )\|_H\leq C\left(\frac{1+|x|^{\frac{1}{2}}}{|x-y|^{n-\frac{1}{2}}}+\log \frac{1}{|x-y|}\right), \quad (x,y)\in \tilde{\mathcal{N}},\;x\not=y.
$$
Moreover, Lemma \ref{LemaD} leads to
\begin{eqnarray*}
\|J_2(x,y,\cdot )\|_H&\leq &Ce^{-|y|^2}\left(\int_0^\infty t^{2\sigma -1}\left(\int_0^1\partial_t^\sigma [te^{-\frac{t^2}{4u}}]u^{-\frac{3}{2}}du\right)^2dt\right)^{\frac{1}{2}}\\
&=&Ce^{-|y|^2}\left(\int_0^1t^{2\sigma -1}\left(\int_1^\infty \partial_t^\sigma [te^{-\frac{t^2}{4u}}]u^{-\frac{3}{2}}du\right)^2dt\right.\\
&+&\left.\int_1^\infty t^{2\sigma -1}\left(\int_0^1\partial_t^\sigma [te^{-\frac{t^2}{4u}}]u^{-\frac{3}{2}}du\right)^2dt\right)^{\frac{1}{2}}\\
&\leq& Ce^{-|y|^2}\left(\int_0^1 t^{2\sigma -1}dt\left(\int_1^\infty u^{-\frac{2+\sigma }{2}}du\right)^2+\int_1^\infty t^{2\sigma -1}\left(\int_0^1u^{-\frac{2+\sigma }{2}}\left(\frac{u}{t^2}\right)^{\frac{1+\sigma }{2}}du\right)^2dt\right)^{\frac{1}{2}}\\
&\leq &Ce^{-|y|^2}\leq C\frac{1+|x|^{\frac{1}{2}}}{|x-y|^{n-\frac{1}{2}}},\quad x,y\in \mathbb{R}^n,\;x\not=y.
\end{eqnarray*}
Putting together the above estimates we conclude that (\ref{N4}) holds for $\beta =0$.

Suppose now that $\beta >0$. By using again Minkowski's inequality and Lemma \ref{LemaD} we deduce that
\begin{equation}\label{Hbeta}
||H_{\beta ,\sigma }(x,y,\cdot)||_H\leq C\int_0^\infty \frac{1}{u}|W_u^\mathcal{O}(x,y)-W_u^{-\Delta }(x-y)|e^{-\beta u}du,\quad x,y\in \mathbb{R}^n,x\not=y.\end{equation}

For every $x \in  \mathbb R^n$ we define $\rho(x)=\min \{1,\frac{1}{|x|^2}\}$. We split the integral in (\ref{Hbeta}) as follows
\begin{eqnarray*}
\int_0^\infty \frac{1}{u}|W_u^{\mathcal{O}}(x,y)-W_u^{-\Delta}(x-y)|e^{-\beta u}du &=& \int_0^{\rho(x)} \frac{1}{u}|W_u^{\mathcal{O}}(x,y)-W_u^{-\Delta}(x-y)|e^{-\beta u}du \\
&&+ \int_{\rho(x)}^\infty W_u^{\mathcal{O}}(x,y) e^{-\beta u}\frac{du}{u} + \int_{\rho(x)}^\infty W_u^{-\Delta}(x-y)\frac{du}{u} \\
&=& \sum_{j=1}^3 I_j(x,y),\;\;x,y \in \mathbb R^n.
\end{eqnarray*}
As in \cite[p. 18]{HTV1} we get, for $j=1$ and $j=3$,
$$
I_j(x,y) \leq C\frac{1+|x|^{\frac{1}{2}}}{|x-y|^{n-\frac{1}{2}}}, \;\;(x,y) \in \tilde{\mathcal{N}}.
$$
Moreover, since $W_u^\mathcal{O}(x,y) \leq C \max\{u^{-\frac{n}{2}},1\}$, $x,y \in \mathbb R^n$ and $u>0$, it follows that
$$
I_2(x,y) \leq C\int_{\rho(x)}^\infty \frac{du}{u^{\frac{n}{2}+1}}  \leq C \max \{1,|x|^n\} \leq C \frac{1+|x|^{\frac{1}{2}}}{|x-y|^{n-\frac{1}{2}}}, \;\; (x,y) \in \tilde{\mathcal{N}}.
$$
Hence, we obtain
\begin{equation}\label{acotHbeta1}
\|H_{\beta , \sigma}(x,y,\cdot)\|_H \leq C\frac{1+|x|^{\frac{1}{2}}}{|x-y|^{n-\frac{1}{2}}}, \;\; (x,y) \in \tilde{\mathcal{N}}.
\end{equation}
From (\ref{N4}) we deduce that, for every $\beta \geq 0$,
$$
\sup_{x\in \mathbb R^n}\int_{\mathbb R^n}|\varphi(x,y)|\|H_{\beta ,\sigma}(x,y,\cdot)\|_Hdy < \infty,
$$
and
$$
\sup_{y \in \mathbb R^n}\int_{\mathbb R^n}|\varphi(x,y)| \|H_{\beta ,\sigma}(x,y,\cdot)\|_H dx < \infty.
$$
Hence, we conclude that the operator $\mathcal{G}^{\rm loc}_{\mathcal{O}+\beta,\sigma ,B} - \mathcal{G}^{\rm loc}_{-\Delta+\beta,\sigma ,B}$ is bounded from $L^p(\mathbb R^n,d\lambda,B)$ into $L^p(\mathbb R^n,d\lambda,\gamma(H,B))$ because $\gamma(H,\mathbb C)=H$.

In the second step we study the operator $\mathcal{G}^{\rm loc}_{-\Delta+\beta,\sigma ,B} - \mathcal{G}^{\rm loc}_{-\Delta,\sigma ,B}$. Let $f\in L^p(\mathbb R^n,d\lambda,B)$. As above, we have that
$$
\mathcal{G}^{\rm loc}_{-\Delta+\beta,\sigma ,B}(f)(x,t) - \mathcal{G}^{\rm loc}_{-\Delta,\sigma ,B}(f)(x,t) = \int_{\mathbb R^n}f(y)\varphi(x,y) \mathcal{H}_{\beta ,\sigma }(x,y,t)dy,\;\;\mbox{a.e}\;\; x\in \mathbb R^n \;\;\mbox{and}\;\; t>0,
$$
where
$$
\mathcal{H}_{\beta ,\sigma }(x,y,t)= \frac{t^\sigma }{\sqrt {4\pi}}\int_0^\infty u^{-\frac{3}{2}}\partial_t^\sigma [te^{-\frac{t^2}{4u}}] (e^{-\beta u}-1)W_u^{-\Delta}(x-y)du,\;\;x,y\in \mathbb R^n\;\;\mbox{and}\;\;t>0.
$$
By using Minkowski's inequality and Lemma \ref{LemaD} we get
$$
\|\mathcal{H}_{\beta ,\sigma }(x,y,\cdot)\|_H \leq C\int_0^\infty |e^{-\beta u}-1|W_u^{-\Delta}(x-y)\frac{du}{u},\;\; x,y \in \mathbb R^n.
$$
Moreover, we have that
\begin{eqnarray*}
\int_0^1|e^{-\beta u}-1| W_u^{-\Delta}(x-y)\frac{du}{u} &\leq& C\int_0^1 \frac{e^{-\frac{|x-y|^2}{4u}}}{u^{\frac{n}{2}}}du \leq C\int_0^1\left(\frac{u}{|x-y|^2}\right)^{\frac{n-1}{2}}\frac{du}{u^{\frac{n}{2}}} \\
&\leq& \frac{C}{|x-y|^{n-1}} \leq C\frac{1+|x|^{\frac{1}{2}}}{|x-y|^{n-\frac{1}{2}}},\;\;(x,y)\in \tilde{\mathcal{N}},\;x\not=y,
\end{eqnarray*}
and
$$
\int_1^\infty|e^{-\beta u}-1| W_u^{-\Delta}(x-y)\frac{du}{u} \leq \int_1^\infty\frac{du}{u^{\frac{n}{2}+1}} \leq C \leq C\frac{1+|x|^{\frac{1}{2}}}{|x-y|^{n-\frac{1}{2}}},\;\;(x,y)\in \tilde{\mathcal{N}},\;x\not=y.
$$
Hence,
$$
\|\mathcal{H}_{\beta, \sigma }(x,y,\cdot)\|_H\leq C\frac{1+|x|^{\frac{1}{2}}}{|x-y|^{n-\frac{1}{2}}},\;\;(x,y)\in \tilde{\mathcal{N}},\;x\not=y,
$$
and as above we deduce that the operator $\mathcal{G}^{\rm loc}_{-\Delta+\beta,\sigma ,B} - \mathcal{G}^{\rm loc}_{-\Delta,\sigma ,B}$ is bounded from $L^p(\mathbb {R}^n,d\lambda,B)$ into $L^p(\mathbb R^n,d\lambda, \gamma(H,B))$.

Minkowski's inequality leads to
$$
\left\|\mathcal{G}^{\rm glob}_{\mathcal{O}+\beta,\sigma , B}(f)(x,\cdot)\right\|_{L^2((0,\infty),\frac{dt}{t},B)} \leq \int_{\mathbb R^n}|1-\varphi(x,y)|\|f(y)\|_B ||\mathcal{G}_{\mathcal{O}+\beta, \sigma ,B}(x,y,\cdot)||_Hdy,\;\;x\in \mathbb R^n,
$$
where
$$
\mathcal{G}_{\mathcal{O}+\beta, \sigma }(x,y,t)=\frac{t^\sigma }{\sqrt{4\pi }}\int_0^\infty u^{-\frac{3}{2}}\partial_t^\sigma [te^{-\frac{t^2}{4u}}]e^{-\beta u}W_u^\mathcal{O}(x,y)du,\quad x,y\in \mathbb{R}^n\mbox{ and }t>0.
$$
We are going to estimate $||\mathcal{G}_{\mathcal{O}+\beta,\sigma }(x,y,\cdot)||_H$, $(x,y)\in \mathcal{N}^c$. Inspired by \cite[p. 1007]{Son} we define
$$
\Psi (w,z)=\int_0^w\frac{\partial ^m}{\partial z^m}[ze^{-\frac{z^2}{4v}}]v^{-\frac{3}{2}}dv,\quad w\geq 0.
$$
Note that  according to (\ref{NA}), $\Psi (\infty ,z)=0$. By partial integration we can write
\begin{eqnarray*}
\lefteqn{\mathcal{G}_{\mathcal{O}+\beta, \sigma }(x,y,t)=\frac{e^{-i\pi (m-\sigma )}t^\sigma}{\sqrt{4\pi}\Gamma (m-\sigma)}\int_0^\infty u^{-\frac{3}{2}}e^{-\beta u}W_u^\mathcal{O}(x,y)\int_0^\infty \frac{\partial ^m}{\partial t^m}[(t+s)e^{-\frac{(t+s)^2}{4u}}]s^{m-\sigma -1}dsdu}\\
&=&\frac{e^{-i\pi (m-\sigma)}t^\sigma}{\sqrt{4\pi}\Gamma (m-\sigma)}\int_0^\infty s^{m-\sigma -1}\int_0^\infty u^{-\frac{3}{2}}\frac{\partial ^m}{\partial t^m}
[(t+s)e^{-\frac{(t+s)^2}
{4u}}]e^{-\beta u }W_u^\mathcal{O}(x,y)duds\\
&=&\frac{e^{-i\pi (m-\sigma)}t^\sigma}{\sqrt{4\pi}\Gamma (m-\sigma)}\int_0^\infty s^{m-\sigma -1}\int_0^\infty \frac{\partial }{\partial u}[\Psi (u,t+s)]e^ {-\beta u}W_u^\mathcal{O}(x,y)duds\\
&=&-\frac{e^{-i\pi (m-\sigma)}t^\sigma}{\sqrt{4\pi}\Gamma (m-\sigma)}\int_0^\infty s^{m-\sigma -1}\int_0^\infty \Psi (u,t+s)\frac{\partial}{\partial u}\Big[e^ {-\beta u}W_u^\mathcal{O}(x,y)\Big]duds,\;\;x,y\in \mathbb{R}^n\mbox{ and }t>0.
\end{eqnarray*}
The interchange of the order of integrals in the second equality is justified by Lemma \ref{LemaD}.

On the other hand, by using (\ref{NA}) and Lemma \ref{LemaD} we get
\begin{eqnarray*}
\lefteqn{\left|\left|t^\sigma \int_0^\infty \psi (u,t+s)s^{m-\sigma -1}ds\right|\right|_H}\\
&=&\left(\int_0^\infty \left|t^\sigma \int_0^\infty s^{m-\sigma -1}\int_0^u\frac{\partial ^m}{\partial z^m}[ze^{-\frac{z^2}{4v}}]_{|z=t+s}v^{-\frac{3}{2}}dvds\right|^2\frac{dt}{t}\right)^{\frac{1}{2}}\\
&\leq &\left(\int_0^\infty \left|t^\sigma \int_0^{\sqrt{u}} s^{m-\sigma -1}\int_u^\infty \frac{\partial ^m}{\partial z^m}[ze^{-\frac{z^2}{4v}}]_{|z=t+s}v^{-\frac{3}{2}}dvds\right|^2\frac{dt}{t}\right)^{\frac{1}{2}} \\
&+&\left(\int_0^\infty \left|t^\sigma \int_{\sqrt{u}}^\infty s^{m-\sigma -1}\int_0^u\frac{\partial ^m}{\partial z^m}[ze^{-\frac{z^2}{4v}}]_{|z=t+s}v^{-\frac{3}{2}}dvds\right|^2\frac{dt}{t}\right)^{\frac{1}{2}}\\
&\leq &C\left[\left(\int_0^\infty \left(t^\sigma \int_0^{\sqrt{u}} s^{m-\sigma -1}\int_u^\infty e^{-\frac{(t+s)^2}{8v}}v^{-\frac{2+m}{2}}dvds\right)^2\frac{dt}
{t}\right)^{\frac{1}{2}}\right.\\
&+&\left.\left(\int_0^\infty \left(t^\sigma \int_{\sqrt{u}}^\infty s^{m-\sigma -1}\int_0^ue^{-\frac{(t+s)^2}{8v}}v^{-\frac{2+m}{2}}dvds\right)^2\frac{dt}{t}\right)^{\frac{1}{2}}\right]\\
&\leq& C\left[ \int_0^{\sqrt{u}} \int_u^\infty + \int_{\sqrt{u}}^\infty \int_0^u\right]s^{m-\sigma -1}e^{-\frac{s^2}{8v}}v^{-\frac{2+m}{2}}\left(\int_0^\infty t^{2\sigma -1}e^{-\frac{t^2}
{4v}}dt\right)^{\frac{1}{2}}dvds\\
&\leq &C\left[\int_0^{\sqrt{u}}\int_u^\infty +\int_{\sqrt{u}}^\infty\int_0^u  \right] s^{m-\sigma -1}e^{-\frac{s^2}{8v}}v^{\frac{\sigma -m-2}{2}}dvds\\
&\leq &C\left[\int_0^{\sqrt{u}}\int_0^{s^2/u} +\int_{\sqrt{u}}^\infty\int_{s^2/u}^\infty  \right]\frac{1}{s} e^{-\frac{z}{8}}z^{\frac{m-\sigma}{2}-1}dzds
\\
&\leq &C\left[u^{\frac{\sigma -m}{2}}\int_0^{\sqrt{u}}s^{m-\sigma -1}ds+u^{\frac{\sigma -m}{2}+1}\int_{\sqrt{u}}^\infty s^{m-\sigma -3}ds\right]\leq C,\quad u>0.
\end{eqnarray*}

Hence, for every $\beta >0$, Minkowski's inequality allows us to write that
\begin{eqnarray*}
||\mathcal{G}_{\mathcal{O}+\beta,\sigma }(x,y,\cdot )||_H&\leq &C\int_0^\infty \left|\frac{\partial}{\partial u}[e^{-\beta u}W_u^\mathcal{O}(x,y)]\right|du\\
&\leq &C\left(\int_0^\infty e^{-\beta u}W_u^\mathcal{O}(x,y)du+\int_0^\infty e^{-\beta u}\left|\frac{\partial}{\partial u}W_u^\mathcal{O}(x,y)\right|du\right)\\
&\leq &C\left(\sup_{u>0}W_u^\mathcal{O}(x,y)+\int_0^\infty \left|\frac{\partial}{\partial u}W_u^\mathcal{O}(x,y)\right|du\right)\\
&\leq &CK(x,y), \quad (x,y)\in \mathcal{N}^c,
\end{eqnarray*}
where
$$
K(x,y)=\left\{ \begin{array}{l}
                        e^{-|y|^2},\;\;\mbox{if}\;\;x\cdot y \leq 0,\\
        \\
                        \left(\frac{|x+y|}{|x-y|}\right)^{\frac{n}{2}}\exp\left(-\frac{|y|^2-|x|^2}{2}-\frac{|x-y||x+y|}{2}\right),\;\;\mbox{if}\;\;x\cdot y >0.\end{array}
                        \right.
$$
In the last inequality we have taken into account \cite[p. 1008, line -7]{Son} and \cite[Proposition 2.1]{MPS}.

Moreover,
$$
||\mathcal{G}_{\mathcal{O},\sigma }(x,y,\cdot )||_H\leq C\int_0^\infty \left|\frac{\partial}{\partial u}W_u^\mathcal{O}(x,y)\right|du\leq CK(x,y), \quad (x,y)\in \mathcal{N}^c.
$$
Then, according to \cite[Theorem 2.3]{MPS} (see also \cite[Lemma 2.7]{HTV1}) the operator $\mathcal{G}^{\rm glob}_{\mathcal{O}+\beta,\sigma ,B}$ is a bounded operator from $L^p(\mathbb {R}^n,d\lambda,B)$ into $L^p(\mathbb {R}^n,d\lambda,\gamma(H,B))$.

By combining all the above results we conclude that $\mathcal{G}_{\mathcal{O}+\beta,\sigma ,B}$ is bounded from $L^p(\mathbb R^n,d\lambda,B)$ into $L^p(\mathbb R^n,d\lambda,\gamma(H,B))$ if and only if $\mathcal{G}_{-\Delta, \sigma,B}^{\rm loc}$ is bounded from $L^p(\mathbb R^n,d\lambda,B)$ into $L^p(\mathbb R^n,d\lambda,\gamma(H,B))$. According to \cite[Proposition 2.4]{HTV1} $\mathcal{G}_{-\Delta, \sigma,B}^{\rm loc}$ is bounded from $L^p(\mathbb R^n,d\lambda,B)$ into $L^p(\mathbb R^n,d\lambda,\gamma(H,B))$ if and only if $\mathcal{G}_{-\Delta, \sigma ,B}^{\rm loc}$ is bounded from $L^p(\mathbb R^n,B)$ into $L^p(\mathbb R^n,\gamma(H,B))$.

By Proposition \ref{PropoC}, since $B$ is UMD, $\mathcal{G}_{-\Delta, \sigma,B}$ can be extended from $C_c^\infty (\mathbb{R}^n)\otimes B$ to $L^p(\mathbb R^n,B)$ as a bounded operator from $L^p(\mathbb R^n,B)$ into $L^p(\mathbb R^n,\gamma(H,B))$. According to \cite[Proposition 2.3]{HTV1}, it follows that $\mathcal{G}_{-\Delta, \sigma,B}^{\rm loc}$ can be extended from $C_c^\infty (\mathbb{R}^n)\otimes B$ to $L^p(\mathbb R^n,B)$ as a bounded operator from $L^p(\mathbb R^n,B)$ into $L^p(\mathbb R^n,\gamma(H,B))$. Note that in order to apply \cite[Proposition 2.3]{HTV1} we need to show that
$$
\mathcal{G}_{-\Delta, \sigma,B}(f)(x,t)= \int_{\mathbb R^n}t^\sigma\partial_t^\sigma P_t^{-\Delta}(x-y) f(y) dy,\;\; \mbox{a.e.}\;\;x \notin \rm supp\;f,
$$
for every $f \in L_c^\infty(\mathbb R^n)\otimes B$, where the integral is understood in the $\gamma(H,B)$-Bochner sense and the equality is considered in $\gamma(H,B)$. We also have to show that
$$
\|t^\sigma \partial _t^\sigma P_t^{-\Delta}(x-y)\|_H\leq \frac{C}{|x-y|^n},\;\;x,y\in \mathbb R^n\;\;\mbox{and}\;\; x \not=y.
$$
Moreover, we can see that the operator $\mathcal{G}_{\mathcal{O}+\beta , \sigma,B}$ is bounded (not only can be extended) from $L^p(\mathbb R^n,d\lambda,B)$ into $L^p(\mathbb R^n,d\lambda,\gamma(H,B))$, $1<p<\infty$.

In the next proof we need to show similar properties for an operator in the Hermite setting. We prefer to write complete proofs for the properties there because those ones will be more difficult and they will show how the properties can be proved in this case.
\end{proof}

Finally according to Proposition \ref{acotZ} and \cite[Lemma 3.1 and Proposition 3.3]{AT}, we conclude that the operator $\mathcal{G}_{\mathcal{L}+\alpha,\sigma ,B}$ is bounded from $L^2(\mathbb R^n,B)$ into $L^2(\mathbb R^n,\gamma(H,B))$.

\subsection{} Now we establish properties $(i)$ , $(ii)$ and $(iii)$ of Theorem \ref{Teo1}. We are going to use Calder\'on-Zygmund theory for vector valued singular integrals (\cite{RRT}). We consider the function
$$
\mathcal{G}_{\mathcal{L}+\alpha ,\sigma }(x,y,t) = \frac{t^\sigma }{\sqrt{4\pi}}\int_0^\infty u^{-\frac{3}{2}}\partial_t^\sigma \Big[te^{-\frac{t^2}{4u}}\Big]e^{-\alpha u}W_u^\mathcal{L}(x,y) du, \;\; x,y\in \mathbb R^n\;\; \mbox{and}\;\;t>0.
$$
Our aim is to show that, for a certain $C>0$,
\begin{equation}\label{B1}
\|\mathcal{G}_{\mathcal{L}+\alpha ,\sigma }(x,y,\cdot)\|_H \leq \frac{C}{|x-y|^n},\;\;x,y\in \mathbb R^n,\;\; x \not= y,
\end{equation}
and
\begin{equation}\label{B2}
\|\nabla_x\mathcal{G}_{\mathcal{L}+\alpha ,\sigma }(x,y,\cdot)\|_H + \|\nabla_y\mathcal{G}_{\mathcal{L}+\alpha ,\sigma }(x,y,\cdot)\|_H \leq \frac{C}{|x-y|^{n+1}},\;\;x,y\in \mathbb R^n,\;\; x \not= y.
\end{equation}
To see (\ref{B1}) and (\ref{B2}) we can proceed as in \cite[p. 114 and the following ones]{StTo3} but the calculations we now present are simpler than the ones in \cite{StTo3}.

It is no hard to see that
$$
W_t^\mathcal{L}(x,y)=\frac{1}{\pi^{\frac{n}{2}}}\left(\frac{e^{-2t}}{1-e^{-4t}}\right)^{\frac{n}{2}}\exp\left(-\frac{1}{4}\left(|x-y|^2\frac{1+e^{-2t}}{1-e^{-2t}}
+|x+y|^2\frac{1-e^{-2t}}{1+e^{-2t}}\right)\right),\;\; x,y\in \mathbb R^n\;\;\mbox{and}\;\;t>0.
$$
By using Minkowski's inequality, Lemma \ref{LemaD} and the first inequality in (\ref{acotW}) we can write
\begin{eqnarray*}
\|\mathcal{G}_{\mathcal{L}+\alpha ,\sigma }(x,y,\cdot)\|_H &\leq& C\int_0^\infty\frac{e^{-\alpha u}}{u^{\frac{3}{2}}}W_u^\mathcal{L}(x,y)\left(\int_0^\infty t^{2\sigma -1}\left|\partial _t^\sigma \Big[te^{-\frac{t^2}{4u}}\Big]\right|^2dt\right)^{\frac{1}{2}}du\\
&\leq& C \int_0^\infty\frac{e^{-\alpha u}}{u}W^\mathcal{L}_u(x,y)du\leq  C\int_0^\infty \frac{1}{u^{\frac{n}{2}+1}}e^{-c\frac{|x-y|^2}{u}}du\\
&\leq & \frac{C}{|x-y|^n},\;\; x,y \in \mathbb R^n,\;\;x \not=y.
\end{eqnarray*}
In the estimation of the integral extended to the integral $[0,1]$ we have used \cite[Lemma 1.1]{StTo}. Then (\ref{B1}) is shown.

Let $j=1,\ldots,n$. We have that
$$
\frac{\partial}{\partial x_j}\mathcal{G}_{\mathcal{L}+\alpha ,\sigma }(x,y,t) = \frac{t^\sigma }{\sqrt{4\pi}}\int_0^\infty u^{-\frac{3}{2}}\partial _t^\sigma \Big[te^{-\frac{t^2}{4u}}\Big]e^{-\alpha u}\frac{\partial}{\partial x_j}W_u^\mathcal{L}(x,y)du,\;\;x,y\in \mathbb R^n\;\;\mbox{and}\;\; t>0,
$$
where
\begin{multline}\label{B3.5}
\frac{\partial}{\partial x_j}W_u^\mathcal{L}(x,y)= -\frac{1}{2}\left(\frac{e^{-2u}}{\pi(1-e^{-4u})}\right)^{\frac{n}{2}}\left((x_j-y_j)\frac{1+e^{-2u}}{1-e^{-2u}}+(x_j+y_j)\frac{1-e^{-2u}}{1+e^{-2u}}\right)\\
\times \exp\left(-\frac{1}{4}\left(|x-y|^2\frac{1+e^{-2u}}{1-e^{-2u}}+|x+y|^2\frac{1-e^{-2u}}{1+e^{-2u}}\right)\right),\;\;x,y\in \mathbb R^n\;\;\mbox{and}\;\;u>0.
\end{multline}

Then, we obtain
\begin{eqnarray*}
\left\|\frac{\partial}{\partial x_j}\mathcal{G}_{\mathcal{L}+\alpha ,\sigma }(x,y,\cdot)\right\|_H & \leq & C\int_0^\infty e^{-\alpha u}u^{-\frac{3}{2}}\left|\frac{\partial}{\partial x_j}W_u^\mathcal{L}(x,y)\right|\left(\int_0^\infty t^{2\sigma -1}\left|\partial _t^\sigma \Big[te^{-\frac{t^2}{4u}}\Big]\right|^2dt\right)^{\frac{1}{2}}du\\
&\leq & C\int_0^\infty \frac{e^{-\alpha u}}{u}\left|\frac{\partial}{\partial x_j}W_u^\mathcal{L}(x,y)\right|du\leq C\int_0^\infty \frac{1}{u^{\frac{n+3}{2}}}e^{-c\frac{|x-y|^2}{u}}du \\
&\leq &\frac{C}{|x-y|^{n+1}},\;\;x,y\in\mathbb R^n,\;\;x\not= y.
\end{eqnarray*}
Since $\mathcal{G}_{\mathcal{L}+\alpha ,\sigma}(x,y,t)=\mathcal{G}_{\mathcal{L}+\alpha ,\sigma }(y,x,t)$, $x,y\in \mathbb R^n$ and $t >0$, (\ref{B2}) is proved.

We now define, for every $x,y\in \mathbb{R}^n$, $x\not=y$, the operator $R(x,y)$ by
$$
\begin{array}{lcl}
R(x,y):\;B&\longrightarrow &\gamma (H,B)\\
\hspace{1.5cm}b&\longrightarrow&R(x,y)(b)=\mathcal{G}_{\mathcal{L}+\alpha ,\sigma }(x,y,\cdot )b.
\end{array}
$$
Note that the definition of $R(x,y)$ is consistent. Indeed, let $x,y\in \mathbb{R}^n$, $x\not=y$. According to (\ref{B1}), for every $b\in B$, $R(x,y)(b)\in L^2((0,\infty ),\frac{dt}{t},B)$ and $R(x,y)(b)$ defines in the natural way a bounded operator from $H$ into $B$. Moreover, if $(e_k)_{k=1}^\infty$ denotes an orthonormal basis in $H$ we can write
\begin{eqnarray*}
\|R(x,y)(b)\|_{\gamma (H,B)}&=&\left(\mathbb{E}\left\|\sum_{k=1}^\infty \gamma _k\int_0^\infty \mathcal{G}_{\mathcal{L}+\alpha ,\sigma}(x,y,t)e_k(t)dtb\right\|_B^2\right)^{\frac{1}{2}}\\
&=&\|b\|_B\left(\mathbb{E}\left|\sum_{k=1}^\infty \gamma _k\int_0^\infty \mathcal{G}_{\mathcal{L}+\alpha ,\sigma}(x,y,t)e_k(t)dt\right|^2\right)^{\frac{1}{2}}\\
&\leq&C\|b\|_B\|\mathcal{G}_{\mathcal{L}+\alpha ,\sigma}(x,y,\cdot )\|_H\leq C\frac{\|b\|_B}{|x-y|^n},\quad b\in B,
\end{eqnarray*}
because $\gamma(H,\mathbb{C})=H$. This inequality shows that
$$
\|R(x,y)\|_{L(B,\gamma (H,B))}\leq C\|\mathcal{G}_{\mathcal{L}+\alpha ,\sigma}(x,y,\cdot )\|_H\leq \frac{C}{|x-y|^n}.
$$
Here $L(B,\gamma (H,B))$ denotes the space of bounded operators from $B$ into $\gamma (H,B)$. Hence, (\ref{B1}) and (\ref{B2}) imply that $R(x,y)$, $x,y\in \mathbb{R}^n$, $x\not=y$, satisfies the standard $(B,\gamma (H,B))$-Calder\'on-Zygmund conditions.

We are going to see that, for every $f\in C_c^\infty (\mathbb{R}^n)\otimes B$
\begin{equation}\label{B3}
\mathcal{G}_{\mathcal{L}+\alpha,\sigma ,B}(f)(x,t)=\left(\int_{\mathbb R^n}\mathcal{G}_{\mathcal{L}+\alpha ,\sigma }(x,y,\cdot)f(y)dy\right)(t),\quad x\not \in \mbox{ supp }f,
\end{equation}
where the integral is understood in the Bochner sense on $\gamma (H,B)$.

If $f = \displaystyle\sum_{i=1}^Na_i\phi_i$, where $a_i \in B$ and $\phi_i \in C_c^\infty(\mathbb R^n)$, $i=1,\ldots,N$, with $N\in\mathbb{N}$, we have that
$$
\mathcal{G}_{\mathcal{L}+\alpha,\sigma ,B}(f)(x,t)=\sum_{i=1}^Na_i\mathcal{G}_{\mathcal{L}+\alpha,\sigma ,\mathbb C}(\phi_i)(x,t).
$$
Hence, in order to show (\ref{B3}) it is sufficient to prove that, for every $\phi\in C_c^\infty(\mathbb R^n)$,
$$
\mathcal{G}_{\mathcal{L}+\alpha,\sigma ,\mathbb C}(\phi)(x,t)=\left(\int_{\mathbb R^n}\mathcal{G}_{\mathcal{L}+\alpha ,\sigma }(x,y,\cdot)\phi(y)dy\right)(t),
$$
where the integral is understood in the Bochner sense on $\gamma(H,\mathbb C)=H$.

Let $\phi \in C_c^\infty(\mathbb R^n)$ and $\psi \in H$. We can write
\begin{eqnarray*}
\langle \psi,\int_{\mathbb R^n}\mathcal{G}_{\mathcal{L}+\alpha ,\sigma }(x,y,\cdot)\phi(y)dy\rangle _H &=&\int_{\mathbb R^n}\langle \psi,\mathcal{G}_{\mathcal{L}+\alpha ,\sigma }(x,y,\cdot)\rangle_H\phi(y)dy \\
&=& \int_{\mathbb R^n}\int_0^\infty \psi(t)\mathcal{G}_{\mathcal{L}+\alpha ,\sigma }(x,y,t)\frac{dt}{t}\phi(y)dy,\;\; x\notin \rm supp\;\phi.
\end{eqnarray*}
According to (\ref{B1}), H\"older's inequality leads to
\begin{eqnarray*}
\int_{\mathbb R^n}\int_0^\infty |\psi(t)\mathcal{G}_{\mathcal{L}+\alpha ,\sigma }(x,y,t)|\frac{dt}{t}|\phi(y)|dy &\leq& \|\psi\|_H\int_{\mathbb R^n}|\phi(y)|\|\mathcal{G}_{\mathcal{L}+\alpha ,\sigma }(x,y,\cdot)\|_Hdy\\
&\leq & C\|\psi\|_H\int_{\rm supp\;\phi}\frac{|\phi (y)|}{|x-y|^n}dy < \infty,\;\;x\notin \rm supp\;\phi.
\end{eqnarray*}
Then, we obtain
$$
\langle\psi,\int_{\mathbb R^n}\mathcal{G}_{\mathcal{L}+\alpha ,\sigma }(x,y,\cdot)\phi(y)dy\rangle_H=\int_0^\infty\psi(t)\int_{\mathbb R^n}\mathcal{G}_{\mathcal{L}+\alpha ,\sigma }(x,y,t)\phi(y)dy\frac{dt}{t},\;\; x \notin \rm supp\phi.
$$
Hence,
$$
\mathcal{G}_{\mathcal{L}+\alpha,\sigma ,\mathbb C}(\phi )(x,t)=\left(\int_{\mathbb R^n}\mathcal{G}_{\mathcal{L}+\alpha ,\sigma }(x,y,\cdot)\phi(y)dy\right)(t),\;\;x\notin \rm supp \;\phi.
$$
By using Calder\'on-Zygmund theory for vector valued singular integrals we deduce that the operator $\mathcal{G}_{\mathcal{L}+\alpha,\sigma ,B}$ can be extended from $L^2(\mathbb R^n,B)\cap L^p(\mathbb R^n,B)$ to $L^p(\mathbb R^n,B)$ as a bounded operator $\widetilde{\mathcal{G}}_{\mathcal{L}+\alpha,\sigma ,B}$ from $L^p(\mathbb R^n,B)$ into $L^p(\mathbb R^n,\gamma(H,B))$ $1<p<\infty$, from $L^1(\mathbb R^n,B)$ into $L^{1,\infty}(\mathbb R^n,\gamma(H,B))$, and from $H^1(\mathbb R^n,B)$ into $L^{1,\infty}(\mathbb R^n,\gamma(H,B))$.

We now show that, for every $f  \in L^p(\mathbb R^n,B)$, $1\leq p < \infty$,
\begin{equation}\label{B4}
\widetilde{\mathcal{G}}_{\mathcal{L}+\alpha ,\sigma ,B}(f)(x,t)=\int_{\mathbb R^n}\mathcal{G}_{\mathcal{L}+\alpha ,\sigma }(x,y,t)f(y)dy.
\end{equation}
Let $f$ be a function in $L^p(\mathbb R^n,B)$, $1\leq p<\infty$. We choose a sequence $(f_m)_{m=1}^\infty$ in $C_c^\infty(\mathbb R^n) \otimes B$ such that $f_m \rightarrow f$, as $m \rightarrow \infty$, in $L^p(\mathbb R^n,B)$. Since $\alpha +n>0$, by Lemma \ref{LemaD} and the first inequality in (\ref{acotW}), we can write
\begin{eqnarray*}
|\mathcal{G}_{\mathcal{L}+\alpha ,\sigma }(x,y,t)| &\leq& Ct^\sigma \int_0^\infty u^{-\frac{3}{2}}\left|\partial _t^\sigma \Big[te^{-\frac{t^2}{4u}}\Big]\right|\frac{e^{-(\alpha+n)u-c\frac{|x-y|^2}{u}}}{(1-e^{-4u})^{\frac{n}{2}}}du\\
&\leq &Ct^\sigma \int_0^\infty u^{-\frac{n+4}{2}}\left|\partial _t^\sigma \Big[te^{-\frac{t^2}{4u}}\Big]\right|e^{-c\frac{|x-y|^2}{u}}du\\
&\leq&Ct^\sigma \int_0^\infty u^{-\frac{n+\sigma +3}{2}}e^{-c\frac{t^2+|x-y|^2}{u}}du\leq C\frac{t^\sigma }{(t+|x-y|)^{n+\sigma +1}},\;\;x,y\in \mathbb R^n\;\;\mbox{and}\;\;t>0.
\end{eqnarray*}
Then, for every $N\in \mathbb N$ and $x\in \mathbb R^n$,
$$
\mathcal{G}_{\mathcal{L}+\alpha,\sigma ,B}(f_m)(x,\cdot)\rightarrow \mathcal{G}_{\mathcal{L}+\alpha,\sigma ,B}(f)(x,\cdot),\;\;\mbox{as}\;\;m\rightarrow \infty,
$$
in $L^2\left(\left(\frac{1}{N},\infty\right),\frac{dt}{t},B\right)$. Since $\mathcal{G}_{\mathcal{L}+\alpha,\sigma ,B}(f_m) \rightarrow \widetilde{\mathcal{G}}_{\mathcal{L}+\alpha ,\sigma ,B}(f)$, as $m\rightarrow \infty$, in $L^p(\mathbb R^n,\gamma(H,B))$, there exist an increasing sequence $(m_k)_{k=1}^\infty$ in $\mathbb N$ and a subset $\Omega \subset \mathbb R^n$ such that $|\mathbb R^n \setminus \Omega|=0$ and, for every $x\in \Omega$,
$$
\lim_{m\rightarrow \infty} \mathcal{G}_{\mathcal{L}+\alpha,\sigma ,B}(f_{m_k})(x,\cdot)  = \widetilde{\mathcal{G}}_{\mathcal{L}+\alpha,\sigma ,B}(f)(x), \;\;\mbox{in}\;\; \gamma(H,B).
$$
Since $\gamma(H,B)$ is contained in the space $L(H,B)$ of the bounded operators from $H$ into $B$, we have that, for every $x\in \Omega$,
$$
\lim_{m\rightarrow \infty} \mathcal{G}_{\mathcal{L}+\alpha,\sigma ,B}(f_{m_k})(x,\cdot)  = \widetilde{\mathcal{G}}_{\mathcal{L}+\alpha,\sigma ,B}(f)(x), \;\;\mbox{in}\;\; L(H,B).
$$
Let $x \in \Omega$. Suppose that $h\in H$ and that the support of $h$ is a compact contained in $(0,\infty)$. Then, for every $S\in B^*$ we have that
\begin{eqnarray*}
\langle S,[\widetilde{\mathcal{G}}_{\mathcal{L}+\alpha,\sigma ,B}(f)(x)](h)\rangle_{B^*, B} &=& \lim_{k\rightarrow\infty}\langle S,[\mathcal{G}_{\mathcal{L}+\alpha,\sigma ,B}(f_{m_k})(x,\cdot)](h)\rangle_{B^*,B}\\
&=&\lim_{k\rightarrow \infty}\int_0^\infty\langle S,\mathcal{G}_{\mathcal{L}+\alpha ,\sigma ,B}(f_{m_k})(x,t)\rangle_{B^*,B}h(t)\frac{dt}{t}\\
&=&\int_0^\infty\langle S,\mathcal{G}_{\mathcal{L}+\alpha ,\sigma ,B}(f)(x,t)\rangle_{B^*,B}h(t)\frac{dt}{t},
\end{eqnarray*}
and
$$
\left|\int_0^\infty\langle S,\mathcal{G}_{\mathcal{L}+\alpha ,\sigma ,B}(f)(x,t)\rangle_{B^*,B}h(t)\frac{dt}{t}\right|\leq C\|h\|_H,
$$
where $C$ does not depend on $h$. Hence, $\langle S,\mathcal{G}_{\mathcal{L}+\alpha,\sigma ,B}(f)(x,\cdot)\rangle_{B^*,B}\in H$ and, for every $h \in H$,
$$
\int_0^\infty\langle S,\mathcal{G}_{\mathcal{L}+\alpha,\sigma ,B}(f)(x,t)\rangle_{B^*,B}h(t)\frac{dt}{t}=\langle S,[\widetilde{\mathcal{G}}_{\mathcal{L}+\alpha,\sigma ,B}(f)(x)](h)\rangle_{B^*,B}.
$$
We conclude that $\widetilde{\mathcal{G}}_{\mathcal{L}+\alpha,\sigma ,B}(f)(x)=\mathcal{G}_{\mathcal{L}+\alpha,\sigma ,B}(f)(x,\cdot)$ as elements of $\gamma(H,B)$.

\subsection{} To complete the proof of Theorem \ref{Teo1} it only remains to show that there exist $C>0$ such that
 \begin{equation}\label{16.1}
\|f\|_{L^p(\mathbb{R}^n,B)}\leq C\|\mathcal{G}_{\mathcal{L}+\alpha ,\sigma ,B}(f)\|_{L^p(\mathbb{R}^n,B)},\quad f\in L^p(\mathbb{R}^n,B).
\end{equation}

In order to prove (\ref{16.1}) we need the following polarization identity.
\begin{propo}\label{polarization}
Let $B$ be a Banach space, $\alpha >-n$ and $\sigma >0$. If $1<p<\infty$ and $f_1\in L^p(\mathbb R^n,B)$ (respectively, $L^p(\mathbb R^n) \otimes B$) and $f_2\in L^{p'}(\mathbb R^n)\otimes B^*$ (respectively, $L^{p'}(\mathbb R^n,B^*)$), then
\begin{equation}\label{pol}
\int^\infty_0 \int_{\mathbb R^n}\langle \mathcal{G}_{\mathcal{L}+\alpha,\sigma ,B}(f_1)(x,t),\mathcal{G}_{\mathcal{L}+\alpha,\sigma ,B^*}(f_2)(x,t)\rangle_{B,B^*}dx\frac{dt}{t} = \frac{e^{2\sigma \pi i}\Gamma (2\sigma )}{2^{2\sigma}}\int_{\mathbb R^n}\langle f_1(x),f_2(x)\rangle_{B,B^*} dx.
\end{equation}
\end{propo}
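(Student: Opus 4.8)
The plan is to peel off a scalar factor from whichever of the two arguments lies in a tensor product, so that (\ref{pol}) collapses to a purely scalar polarization identity for the semigroup $\{P_t^{\mathcal{L}+\alpha}\}_{t>0}$, and then to evaluate that identity by expanding in Hermite functions. Before doing so, and as promised in the remark following (\ref{gint}), I would check that $(x,t)\mapsto t^\sigma\partial_t^\sigma P_t^{\mathcal{L}+\alpha}(f)(x)$ is strongly $B$-measurable on $\mathbb{R}^n\times(0,\infty)$: for $f\in L^2(\mathbb{R}^n,B)$ each partial sum of the series in (\ref{gint}) is a finite combination of scalar smooth functions of $(x,t)$ with the fixed vectors $\langle f,h_k\rangle\in B$, hence jointly continuous and strongly measurable, and by \cite[(2.1) and Lemma 2.1]{StTo} the series converges in $B$ at every $(x,t)$, so the sum is strongly measurable as a pointwise limit; for $f\in L^p(\mathbb{R}^n,B)$ one runs the same argument on the kernel representation (\ref{B4}).

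Since both sides of (\ref{pol}) are separately linear in $f_1$ and $f_2$, it suffices to treat the first case with $f_1\in L^p(\mathbb{R}^n,B)$ arbitrary and $f_2=\phi\,S$ for $S\in B^*$, $\phi\in L^{p'}(\mathbb{R}^n)$, the second case being symmetric (one peels the scalar factor off $f_1$ and uses $B\hookrightarrow B^{**}$). As $P_t^{\mathcal{L}+\alpha}$ acts through a scalar kernel, $\mathcal{G}_{\mathcal{L}+\alpha,\sigma,B^*}(\phi S)(x,t)=S\,\mathcal{G}_{\mathcal{L}+\alpha,\sigma,\mathbb{C}}(\phi)(x,t)$, and applying (\ref{gint}) to $f_1$ and to $\langle S,f_1\rangle_{B^*,B}$ gives $\langle S,t^\sigma\partial_t^\sigma P_t^{\mathcal{L}+\alpha}f_1(x)\rangle_{B^*,B}=t^\sigma\partial_t^\sigma P_t^{\mathcal{L}+\alpha}(\langle S,f_1\rangle_{B^*,B})(x)$; hence the integrand of (\ref{pol}) becomes the scalar product $\mathcal{G}_{\mathcal{L}+\alpha,\sigma,\mathbb{C}}(g)(x,t)\,\mathcal{G}_{\mathcal{L}+\alpha,\sigma,\mathbb{C}}(\phi)(x,t)$ with $g:=\langle S,f_1\rangle_{B^*,B}\in L^p(\mathbb{R}^n)$, and the right-hand side becomes $\frac{e^{2\sigma\pi i}\Gamma(2\sigma)}{2^{2\sigma}}\int_{\mathbb{R}^n}g\phi$. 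Cauchy--Schwarz in $\frac{dt}{t}$, H\"older in $x$, and the boundedness of $\mathcal{G}_{\mathcal{L}+\alpha,\sigma,\mathbb{C}}$ from $L^r(\mathbb{R}^n)$ into $L^r(\mathbb{R}^n,H)$ proved earlier in this section make the double integral absolutely convergent, so Fubini applies and everything reduces to the scalar polarization identity
\begin{equation*}
\int_0^\infty \int_{\mathbb{R}^n}\mathcal{G}_{\mathcal{L}+\alpha,\sigma,\mathbb{C}}(g)(x,t)\,\mathcal{G}_{\mathcal{L}+\alpha,\sigma,\mathbb{C}}(\phi)(x,t)\,dx\,\frac{dt}{t}=\frac{e^{2\sigma\pi i}\Gamma(2\sigma)}{2^{2\sigma}}\int_{\mathbb{R}^n}g(x)\phi(x)\,dx,\qquad g\in L^p(\mathbb{R}^n),\ \phi\in L^{p'}(\mathbb{R}^n).
\end{equation*}

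Both sides of this identity are bounded bilinear forms on $L^p(\mathbb{R}^n)\times L^{p'}(\mathbb{R}^n)$, so by density it is enough to verify it when $g$ and $\phi$ are finite linear combinations of Hermite functions, in which case all the sums below are finite: inserting (\ref{gint}) with $\lambda_k=2|k|+n+\alpha$, using $\int_{\mathbb{R}^n}h_k h_l\,dx=\delta_{k,l}$ together with Parseval, and evaluating $\int_0^\infty(t\sqrt{\lambda_k})^{2\sigma}e^{-2t\sqrt{\lambda_k}}\frac{dt}{t}=\int_0^\infty s^{2\sigma-1}e^{-2s}\,ds=\frac{\Gamma(2\sigma)}{2^{2\sigma}}$ term by term yields the claim. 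The phase $e^{2\sigma\pi i}=(e^{i\pi\sigma})^2$, rather than $1$, appears precisely because the pairing $\langle\cdot,\cdot\rangle_{B,B^*}$, and in the scalar model $\langle\cdot,\cdot\rangle_{\mathbb{C},\mathbb{C}}$, is bilinear and not conjugate-linear, so the two factors $e^{i\pi\sigma}$ produced by the two $\sigma$-th derivatives multiply instead of cancelling; this bilinear-versus-sesquilinear distinction is the only genuinely conceptual point. I do not expect any single hard estimate here: the real burden is the bookkeeping around the reduction --- strong measurability, the fact that $S$ commutes with $\mathcal{G}_{\mathcal{L}+\alpha,\sigma,B}$ in the required sense, the Fubini interchange of $\int_0^\infty$ and $\int_{\mathbb{R}^n}$, and the termwise handling of the Hermite series --- each of which is controlled by tools already in hand, namely (\ref{gint}), the estimate (\ref{B1}) and its pointwise refinement $|\mathcal{G}_{\mathcal{L}+\alpha,\sigma}(x,y,t)|\le Ct^\sigma(t+|x-y|)^{-n-\sigma-1}$ obtained in the previous step, and the $L^r$-boundedness of the scalar $g$-function.
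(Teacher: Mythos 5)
Your proposal is correct and follows essentially the same route as the paper: reduce the vector-valued pairing to the scalar polarization identity by peeling the $B^*$ (or $B$) factor off the tensor-product argument, prove the scalar identity via the Hermite expansion (\ref{gint}), orthonormality and the integral $\int_0^\infty s^{2\sigma-1}e^{-2s}\,ds=\Gamma(2\sigma)2^{-2\sigma}$, and extend by density using the $L^r$-boundedness of the scalar $g$-function. The only (harmless) variation is that you verify the scalar identity first on finite linear combinations of Hermite functions and then pass to $L^p\times L^{p'}$ in one density step, whereas the paper first establishes it on all of $L^2\times L^2$ via uniform convergence of the series and Plancherel before the same density argument.
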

\begin{proof}
Let firstly $f_j \in L^2(\mathbb R^n)$, $j=1,2$. We can write,
\begin{equation}\label{M1}
P_t^{\mathcal{L}+\alpha}(f_j)(x)=\sum_{k\in\mathbb N^n} e^{-t\sqrt{2|k|+n+\alpha}}\langle f_j,h_k\rangle h_k(x),\;\; j=1,2,\;\; x\in \mathbb R^n\;\mbox{and}\; t>0.
\end{equation}
Note that, according to \cite[(2.1) and Lemma 2.3]{StTo}, the last series converges uniformly in $(x,t) \in \mathbb R^n\times (a,\infty)$, for every $a>0$, and we have that, for each $x\in \mathbb R^n\;\mbox{and}\; t>0$,
$$
\mathcal{G}_{\mathcal{L}+\alpha,\sigma ,\mathbb C}(f_j)(x,t)=e^{\sigma \pi i}\sum_{k \in \mathbb N^n} \left(t \sqrt{2|k|+n+\alpha}\right)^\sigma e^{-t\sqrt{2|k|+n+\alpha}}\langle f_j,h_k\rangle h_k(x),\;\; j=1,2 .
$$
Indeed, let $j=1,2$ and assume that $m$ is the smallest integer which strictly exceeds $\sigma$. According again to \cite[(2.1) and Lemma 2.3]{StTo} we deduce that the series in (\ref{M1}) is uniformly convergent in $(x,t)\in \mathbb{R}^n\times [a,b]$, for every $0<a<b<\infty$, and

$$
\frac{\partial ^m}{\partial t^m}P_t^{\mathcal{L}+\alpha}(f_j)(x)=\sum_{k\in \mathbb{N}^n}(-\sqrt{2|k|+n+\alpha })^me^{-t\sqrt{2|k|+n+\alpha}}\langle f_j,h_k\rangle h_k(x),\quad x\in \mathbb{R}^n\mbox{ and }t>0.
$$
We have also that
\begin{multline*}
\partial_t^\sigma P_t^{\mathcal{L}+\alpha }(f_j)(x)=\frac{e^{-i\pi (m-\sigma )}}{\Gamma (m-\sigma )}\int_0^\infty \frac{\partial ^m}{\partial t^m}[P_{t+s}^{\mathcal{L}+\alpha }(f_j)(x)]s^{m-\sigma -1}ds\\
=\frac{e^{-i\pi (m-\sigma )}}{\Gamma (m-\sigma )}\sum_{k\in \mathbb{N}^n}(-\sqrt{2|k|+n+\alpha })^me^{-t\sqrt{2|k|+n+\alpha}}\langle f_j,h_k\rangle h_k(x)\\
\times \int_0^\infty e^{-s\sqrt{2|k|+n+\alpha }}s^{m-\sigma -1}ds\\
=e^{ i\pi\sigma}\sum_{k\in \mathbb{N}^n}(\sqrt{2|k|+n+\alpha })^\sigma e^{-t\sqrt{2|k|+n+\alpha}}\langle f_j,h_k\rangle h_k(x),
\quad x\in \mathbb{R}^n\mbox{ and }t>0.
\end{multline*}
By using now Plancherel equality we conclude that
\begin{equation}\label{pol1}
\int_0^\infty\int_{\mathbb R^n}\mathcal{G}_{\mathcal{L}+\alpha,\sigma ,\mathbb C}(f_1)(x,t)\mathcal{G}_{\mathcal{L}+\alpha,\sigma ,\mathbb C}(f_2)(x,t)dx\frac{dt}{t} =  \frac{e^{2\sigma \pi i}\Gamma (2\sigma )}{2^{2\sigma}}\int_{\mathbb R^n}f_1(x)f_2(x)dx.
\end{equation}
Let $1<p<\infty$. Since $L^p(\mathbb R^n)\cap L^2(\mathbb R^n)$ is dense in $L^p(\mathbb R^n)$ and, as it was shown above, the operator $\mathcal{G}_{\mathcal{L}+\alpha,\sigma ,\mathbb C}$ is bounded from $L^q(\mathbb R^n)$ into $L^q(\mathbb{R}^n,H)$, for $q=p$ and $q=p'$, the equality (\ref{pol1}) holds for every $f_1 \in L^p(\mathbb R^n)$ and $f_2 \in L^{p'}(\mathbb R^n)$.

Suppose now that $f_1 \in L^p(\mathbb R^n,B)$ and $f_2 \in  L^{p'}(\mathbb R^n)\otimes B^*$ is defined by $f_2=\sum_{j=1}^N b_jg_j$, where $b_j \in B^*$ and $g_j \in L^{p'}(\mathbb R^n)$, $j=1,\ldots,N$, with $N \in \mathbb N$. We can write, for $t>0$ and $x\in \mathbb R^n$,
\begin{eqnarray*}
\langle \mathcal{G}_{\mathcal{L} + \alpha,\sigma ,B}(f_1)(x,t),\mathcal{G}_{\mathcal{L} + \alpha,\sigma ,B^*}(f_2)(x,t)\rangle_{B,B^*} &=& \sum_{j=1}^N\langle \mathcal{G}_{\mathcal{L} + \alpha,\sigma ,B}(f_1)(x,t),b_j\rangle_{B,B^*}\mathcal{G}_{\mathcal{L}+\alpha,\sigma ,\mathbb C}(g_j)(x,t) \\
&=& \sum_{j=1}^N \mathcal{G}_{\mathcal{L} + \alpha,\sigma ,\mathbb C}\left(\langle f_1,b_j\rangle_{B,B^*}\right)(x,t)\mathcal{G}_{\mathcal{L} + \alpha,\sigma ,\mathbb C}(g_j)(x,t).
\end{eqnarray*}
Hence, since $\langle f_1,b_j\rangle_{B,B^*} \in L^p(\mathbb R^n)$, $j=1,\ldots,N$, from (\ref{pol1}) we infer that
$$
\int_0^\infty\int_{\mathbb R^n}\langle\mathcal{G}_{\mathcal{L}+\alpha,\sigma ,B}(f_1)(x,t),\mathcal{G}_{\mathcal{L}+\alpha,\sigma ,B^*}(f_2)(x,t)\rangle_{B,B^*}dx\frac{dt}{t}= \frac{e^{2\sigma \pi i}\Gamma (2\sigma )}{2^{2\sigma}}\int_{\mathbb R^n}\langle f_1(x),f_2(x)\rangle_{B,B^*}dx.
$$
In a similar way we can prove (\ref{pol}) when $f_1 \in L^p(\mathbb R^n)\otimes B$ and $f_2 \in L^{p'}(\mathbb R^n,B^*)$.

\end{proof}

We now prove (\ref{16.1}). Let $f\in L^p(\mathbb R^n,B)$, $1<p<\infty$. We have that (\cite[Lemma 2.3]{GLY})
$$
\|f\|_{L^p(\mathbb R^n,B)} = \sup_{\substack{g\in L^{p'}_c(\mathbb R^n)\otimes B^*\\\|g\|_{L^{p'}(\mathbb R^n,B^*)}\leq 1}}\left|\int_{\mathbb R^n}\langle f(x),g(x)\rangle_{B,B^*}dx \right|.
$$
The space $B^*$ is UMD because $B$ is UMD. Then, by using \cite[Proposition 2.4]{HyWe}, Proposition \ref{polarization}, and the $L^p$-boundedness properties of the operator $\mathcal{G}_{\mathcal{L}+\alpha,\sigma ,B}$ that
we have established (Subsection 2.2), we get
\begin{eqnarray*}
\left|\int_{\mathbb R^n}\langle f(x),g(x)\rangle_{B,B^*}dx\right|&=&  \frac{2^{2\sigma}}{\Gamma (2\sigma )}\left|\int_0^\infty\int_{\mathbb R^n}\langle \mathcal{G}_{\mathcal{L}+\alpha,\sigma ,B}(f)(x,t),\mathcal{G}_{\mathcal{L}+\alpha,\sigma ,B^*}(g)(x,t)\rangle_{B,B^*}dx\frac{dt}{t}\right| \\
&\leq & C\int_{\mathbb R^n}\left\|\mathcal{G}_{\mathcal{L}+\alpha,\sigma ,B}(f)(x,\cdot)\right\|_{\gamma(H,B)}
\left\|\mathcal{G}_{\mathcal{L}+\alpha,\sigma ,B^*}(g)(x,\cdot)\right\|_{\gamma(H,B^*)}dx\\
&\leq& C\left\|\mathcal{G}_{\mathcal{L}+\alpha,\sigma ,B}(f)\right\|_{L^p(\mathbb R^n,\gamma(H,B))}
\left\|\mathcal{G}_{\mathcal{L}+\alpha,\sigma ,B^*}(g)\right\|_{L^{p'}(\mathbb R^n,\gamma(H,B^*))}\\
&\leq&C\left\|\mathcal{G}_{\mathcal{L}+\alpha,\sigma B}(f)\right\|_{L^p(\mathbb R^n,\gamma(H,B))}\|g\|_{L^{p'}(\mathbb R^n,B^*)},\;\;g\in L^{p'}_c(\mathbb R^n)\otimes B^*.
\end{eqnarray*}
Hence, we conclude that
$$
\|f\|_{L^p(\mathbb R^n,B)} \leq C\left\|\mathcal{G}_{\mathcal{L}+\alpha,\sigma ,B}(f)\right\|_{L^p(\mathbb R^n,\gamma(H,B))}.
$$
Thus, Theorem \ref{Teo1} is proved.

\section{Proof of Theorem \ref{Teo2}}

Let $j=1,2,...,n$. The $j$-th Riesz transforms $R_j^\pm$ in the Hermite setting are formally defined by
$$
R_j^\pm =\left(\frac{\partial }{\partial x_j}\pm x_j\right)\mathcal{L}^{-\frac{1}{2}},
$$
where the negative square root $\mathcal{L}^{-\frac{1}{2}}$ of $\mathcal{L}$ is given by
$$
\mathcal{L}^{-\frac{1}{2}}(f)=\frac{1}{\sqrt{\pi }}\int_0^\infty W_t^\mathcal{L}(f)\frac{dt}{\sqrt{t}}.
$$
According to \cite[(3.2)]{StTo}, if $f\in {\rm span }\{h_k\}_{k\in \mathbb{N}^n}$, we have that
\begin{equation}\label{3.1}
\left(\frac{\partial}{\partial x_j}+x_j\right)\mathcal{L}^{-\frac{1}{2}}(f)=\sum_{k\in \mathbb{N}^n}\left(\frac{2k_j}{2|k|+n}\right)^{\frac{1}{2}}\langle f,h_k\rangle h_{k-e_j},
\end{equation}
and
\begin{equation}\label{3.2}
\left(\frac{\partial}{\partial x_j}-x_j\right)\mathcal{L}^{-\frac{1}{2}}(f)=-\sum_{k\in \mathbb{N}^n}\left(\frac{2k_j+2}{2|k|+n}\right)^{\frac{1}{2}}\langle f,h_k\rangle h_{k+e_j},
\end{equation}
where $e_j$ is the $j$-th coordinate vector in $\mathbb{R}^n$. (\ref{3.1}) and (\ref{3.2}) suggest to define the operators $R_j^\pm$ on $L^2(\mathbb{R}^n)$ by
$$
R_j^+(f)=\sum_{k\in \mathbb{N}^n}\left(\frac{2k_j}{2|k|+n}\right)^{\frac{1}{2}}\langle f,h_k\rangle h_{k-e_j},\quad f\in L^2(\mathbb{R}^n),
$$
and
$$
R_j^-(f)=-\sum_{k\in \mathbb{N}^n}\left(\frac{2k_j+2}{2|k|+n}\right)^{\frac{1}{2}}\langle f,h_k\rangle h_{k+e_j},\quad f\in L^2(\mathbb{R}^n).
$$
Plancherel's theorem implies that $R_j^\pm$ are bounded operators from $L^2(\mathbb{R}^n)$ into itself. Stempak and Torrea \cite[Corollary 3.4]{StTo} established that the $j$-th Riesz transforms $R_j^\pm$ can be extended from $L^2(\mathbb{R}^n)\cap L^p(\mathbb{R}^n)$ to $L^p(\mathbb{R}^n)$ as a bounded operator from $L^p(\mathbb{R}^n)$ into itself, for every $1<p<\infty$. In \cite[Corollary 3.4]{StTo} $A_p$-weighted $L^p$ spaces are even considered.

If $B$ is a Banach space, the operators $R_j^\pm$ are defined in $L^p(\mathbb{R}^n)\otimes B$, $1<p<\infty$, in the natural way. Abu-Falahah and Torrea in \cite[Theorem 2.3]{AT} showed that $R_j^\pm$ can be extended to $L^p(\mathbb{R}^n,B)$ as a bounded operator from $L^p(\mathbb{R}^n, B)$ into itself, $1<p<\infty$, provided that $B$ is a UMD Banach space.

By combining \cite[Lemmas 4.1 and 4.2]{StTo} we can write
\begin{equation}\label{3.3}
\mathcal{G}_{\mathcal{L}\pm2,\mathbb{C}}(R_j^\pm (f))=\mp t\left(\frac{\partial }{\partial x_j}\pm x_j\right)P_t^\mathcal{L}(f),
\end{equation}
for every $f\in L^p(\mathbb{R}^n)$, $1<p<\infty$. If $B$ is a Banach space and $1<p<\infty$ from (\ref{3.3}) we deduce that
\begin{equation}\label{18.1}
\mathcal{G}_{\mathcal{L}\pm 2,B}(R_j^\pm (f))=\mp t\left(\frac{\partial }{\partial x_j}\pm x_j\right)P_t^\mathcal{L}(f),\quad f\in L^p(\mathbb{R}^n)\otimes B.
\end{equation}
According to \cite[Theorem 2.3]{AT} and Theorem \ref{Teo1}, $T_{j,\pm}^\mathcal{L}$ can be extended to $L^p(\mathbb{R}^n, B)$ as a bounded operator from $L^p(\mathbb{R}^n,B)$ into $L^p(\mathbb{R}^n,\gamma (H,B))$, provided that $1<p<\infty$ and $B$ is a UMD Banach space. In the case of $T_{j,-}^\mathcal{L}$ we also need assume that $n\geq 3$.

From (\ref{B3.5}) it follows that, for every $x,y\in \mathbb{R}^n$ and $u>0$,
\begin{eqnarray*}
\left|\left(\frac{\partial }{\partial x_j}\pm x_j\right)W_u^\mathcal{L}(x,y)\right|&\leq&C\left(\frac{e^{-2u}}{1-e^{-4u}}\right)^{\frac{n}{2}}\frac{1}{\sqrt{1-e^{-2u}}}\\
&\times&\exp \left(-\frac{1}{8}\left(|x-y|^2\frac{1+e^{-2u}}{1-e^{-2u}}+|x+y|^2\frac{1-e^{-2u}}{1+e^{-2u}}\right)\right)\\
&\leq&C\frac{e^{-nu}e^{-\frac{|x-y|^2}{u}}}{(1-e^{-2u})^{\frac{n+1}{2}}}.
\end{eqnarray*}
Then, we have that
\begin{eqnarray*}
t\left|\left(\frac{\partial }{\partial x_j}\pm x_j\right)P_t^\mathcal{L}(x,y)\right|&\leq&Ct^2\int_0^\infty u^{-\frac{3}{2}}e^{-\frac{t^2}{4u}}\left|\left(\frac{\partial }{\partial x_j}\pm x_j\right)W_u^\mathcal{L}(x,y)\right|du\\
&\leq &Ct^2\int_0^\infty \frac{e^{-c\frac{t^2+|x-y|^2}{u}}}{u^{\frac{n+5}{2}}}du \leq \frac{Ct^2}{(t+|x-y|)^{n+3}}\\
&\leq &\frac{Ct}{(t+|x-y|)^{n+2}},\quad x,y\in \mathbb{R}^n,t>0.
\end{eqnarray*}
By proceeding as in the proof of the corresponding property in Subsection 2.2 we can conclude that the operators $T_{j,\pm}^\mathcal{L}$ are bounded from $L^p(\mathbb{R}^n,B)$ into $L^p(\mathbb{R}^n,\gamma (H,B))$, for every $1<p<\infty$, when $B$ is a UMD Banach space and $n\geq 3$ in the case of $T_{j,-}^\mathcal{L}$ .

Thus the proof of Theorem \ref{Teo2} is finished.

\section{Proof of Theorem \ref{Teo3}}
Theorems \ref{Teo1} and \ref{Teo2} show that $(i)\Rightarrow (ii)$ and $(i)\Rightarrow (iii)$.

Suppose now that $(ii)$ holds for some $1<p<\infty$ and $j=1,2,...,n$. Then, by \cite[Corollary 3.4]{StTo}, (\ref{18.1}) and Theorem \ref{Teo2} we have that, for every $f\in L^p(\mathbb{R}^n)\otimes B$,
$$
||R_j^+(f)||_{L^p(\mathbb{R}^n,B)}\leq C||\mathcal{G}_{\mathcal{L}+2,B}(R_j^+(f))||_{L^p(\mathbb{R}^n,\gamma (H,B))}=C||T_{j,+}^\mathcal{L}(f)||_{L^p(\mathbb{R}^n,\gamma (H,B)) }\leq C||f||_{L^p(\mathbb{R}^n,B)}.
$$
According to \cite[Theorem 2.3]{AT} we conclude that $B$ is a UMD Banach space. Thus $(ii)\Rightarrow (i)$ is shown.

$(iii)\Longrightarrow (i)$ can be proved in a similar way.

%%%%%%%%%%%%%%%%%%%%%%%%%%%   Bibliograf\'{\i}a   %%%%%%%%%%%%%%%%%%%%%%%%%%%%%


\begin{thebibliography}{10}

\bibitem{AT}
I.~Abu-Falahah and J.~L. Torrea.
\newblock Hermite function expansions versus {H}ermite polynomial expansions.
\newblock {\em Glasg. Math. J.}, 48(2):203--215, 2006.

\bibitem{BFRST}
J.~Betancor, J.~C. Fari{\~n}a, L.~Rodr{\'{\i}}guez-Mesa,
A.~Sanabria, and J.~L.
  Torrea.
\newblock Transference between {L}aguerre and {H}ermite settings.
\newblock {\em J. Funct. Anal.}, 254(3):826--850, 2008.

\bibitem{BFMT}
J.~J. Betancor, J.~C. Fari{\~n}a, T.~Mart{\'{\i}}nez, and J.~L.
Torrea.
\newblock Riesz transform and {$g$}-function associated with {B}essel operators
  and their appropriate {B}anach spaces.
\newblock {\em Israel J. Math.}, 157:259--282, 2007.

\bibitem{BFRST2}
J.~J. Betancor, J.~C. Fari{\~n}a, L.~Rodr{\'{\i}}guez-Mesa,
A.~Sanabria, and
  J.~L. Torrea.
\newblock Lusin type and cotype for {L}aguerre {$g$}-functions.
\newblock {\em Israel J. Math.}, 182:1--30, 2011.

\bibitem{BFRTT}
J.~J. Betancor, J.~C. Fari{\~n}a, L.~Rodr{\'{\i}}guez-Mesa,
R.~Testoni, and
  J.~L. Torrea.
\newblock Fractional square functions and potential spaces.
\newblock {\em J. Math. Anal. Appl.}, 386(2):487--504, 2012.

\bibitem{Bo1}
J.~Bourgain.
\newblock Some remarks on {B}anach spaces in which martingale difference
  sequences are unconditional.
\newblock {\em Ark. Mat.}, 21(2):163--168, 1983.

\bibitem{Bu2}
D.~L. Burkholder.
\newblock A geometric condition that implies the existence of certain singular
  integrals of {B}anach-space-valued functions.
\newblock In {\em Conference on harmonic analysis in honor of Antoni Zygmund,
  Vol. I, II (Chicago, Ill., 1981)}, Wadsworth Math. Ser., pages 270--286.
  Wadsworth, Belmont, CA, 1983.

\bibitem{GLLNU}
P.~Graczyk, J.-J. Loeb, I.~A. L{\'o}pez~P., A.~Nowak, and W.~O.
Urbina~R.
\newblock Higher order {R}iesz transforms, fractional derivatives, and
  {S}obolev spaces for {L}aguerre expansions.
\newblock {\em J. Math. Pures Appl. (9)}, 84(3):375--405, 2005.

\bibitem{GLY}
L.~Grafakos, L.~Liu, and D.~Yang.
\newblock Vector-valued singular integrals and maximal functions on spaces of
  homogeneous type.
\newblock {\em Math. Scand.}, 104(2):296--310, 2009.

\bibitem{HTV1}
E.~Harboure, J.~L. Torrea, and B.~Viviani.
\newblock Vector-valued extensions of operators related to the
  {O}rnstein-{U}hlenbeck semigroup.
\newblock {\em J. Anal. Math.}, 91:1--29, 2003.

\bibitem{Hy2}
T.~P. Hyt{\"o}nen.
\newblock Littlewood-{P}aley-{S}tein theory for semigroups in {UMD} spaces.
\newblock {\em Rev. Mat. Iberoam.}, 23(3):973--1009, 2007.

\bibitem{HyWe}
T.~P. Hyt{\"o}nen and L.~Weis.
\newblock The {B}anach space-valued {BMO}, {C}arleson's condition, and
  paraproducts.
\newblock {\em J. Fourier Anal. Appl.}, 16(4):495--513, 2010.

\bibitem{K}
C.~Kaiser.
\newblock Wavelet transforms for functions with values in {L}ebesgue spaces.
\newblock {\em Proceedings of SPIE Optics and Photonics}, Conf. on Mathematical
  Methods: Wavelets XI 5914, 2005.

\bibitem{KW}
C.~Kaiser and L.~Weis.
\newblock Wavelet transform for functions with values in {UMD} spaces.
\newblock {\em Studia Math.}, 186(2):101--126, 2008.

\bibitem{MTX}
T.~Mart{\'{\i}}nez, J.~L. Torrea, and Q.~Xu.
\newblock Vector-valued {L}ittlewood-{P}aley-{S}tein theory for semigroups.
\newblock {\em Adv. Math.}, 203(2):430--475, 2006.

\bibitem{MPS}
T.~Men{\'a}rguez, S.~P{\'e}rez, and F.~Soria.
\newblock The {M}ehler maximal function: a geometric proof of the weak type 1.
\newblock {\em J. London Math. Soc. (2)}, 61(3):846--856, 2000.

\bibitem{Son}
S.~P{\'e}rez.
\newblock Boundedness of {L}ittlewood-{P}aley {$g$}-functions of higher order
  associated with the {O}rnstein-{U}hlenbeck semigroup.
\newblock {\em Indiana Univ. Math. J.}, 50(2):1003--1014, 2001.

\bibitem{RRT}
J.~L. Rubio~de Francia, F.~J. Ruiz, and J.~L. Torrea.
\newblock Calder\'on-{Z}ygmund theory for operator-valued kernels.
\newblock {\em Adv. in Math.}, 62(1):7--48, 1986.

\bibitem{SW}
C.~Segovia and R.~L. Wheeden.
\newblock On certain fractional area integrals.
\newblock {\em J. Math. Mech.}, 19:247--262, 1969/1970.

\bibitem{Stein}
E.~M. Stein.
\newblock {\em Topics in harmonic analysis related to the {L}ittlewood-{P}aley
  theory.}
\newblock Annals of Mathematics Studies, No. 63. Princeton University Press,
  Princeton, N.J., 1970.

\bibitem{StTo}
K.~Stempak and J.~L. Torrea.
\newblock Poisson integrals and {R}iesz transforms for {H}ermite function
  expansions with weights.
\newblock {\em J. Funct. Anal.}, 202(2):443--472, 2003.

\bibitem{StTo3}
K.~Stempak and J.~L. Torrea.
\newblock On {$g$}-functions for {H}ermite function expansions.
\newblock {\em Acta Math. Hungar.}, 109(1-2):99--125, 2005.

\bibitem{StTo2}
K.~Stempak and J.~L. Torrea.
\newblock Higher {R}iesz transforms and imaginary powers associated to the
  harmonic oscillator.
\newblock {\em Acta Math. Hungar.}, 111(1-2):43--64, 2006.

\bibitem{Sz}
G.~Szeg{\H{o}}.
\newblock {\em Orthogonal polynomials}.
\newblock American Mathematical Society, Providence, R.I., fourth edition,
  1975.
\newblock American Mathematical Society, Colloquium Publications, Vol. XXIII.

\bibitem{Th2}
S.~Thangavelu.
\newblock {\em Lectures on {H}ermite and {L}aguerre expansions}, volume~42 of
  {\em Mathematical Notes}.
\newblock Princeton University Press, Princeton, NJ, 1993.

\bibitem{TZ}
J.~Torrea and C.~Zhang.
\newblock Fractional vector-valued {L}ittlewood-{P}aley-{S}tein theory for
  semigroups.
\newblock {\em {P}reprint} 2011  (arXiv: 1105.6022.v3).

\bibitem{Ne}
J.~Van~Neerven.
\newblock {\em {$\gamma$}-radonifying operators -- a survey}.
\newblock {AMSI-ANU} {W}orkshop on {F}pectral {T}heory and {H}armonic
  {A}nalysis, vol. 44 of {P}roc. {C}entre {M}ath. {A}ppl. {A}ustral. {N}at.
  {U}niv. Canberra, 2010.

\bibitem{Xu}
Q.~Xu.
\newblock Littlewood-{P}aley theory for functions with values in uniformly
  convex spaces.
\newblock {\em J. Reine Angew. Math.}, 504:195--226, 1998.

\end{thebibliography}
\end{document}